\documentclass[10pt]{amsart}

\usepackage[T2A]{fontenc}
\usepackage{amsmath}
\usepackage{amsfonts}
\usepackage{amssymb}
\usepackage{amsthm}
\usepackage[toc,page]{appendix}
\usepackage{orcidlink}
\usepackage{amscd}

\usepackage[ukrainian,english]{babel}
\theoremstyle{plain}

\theoremstyle{definition}

\theoremstyle{plain}
\newtheorem*{theoremUkr*}{Theorem}
\newtheorem*{lemmaUkr*}{Lemma}
\newtheorem*{propositionUkr*}{Proposition}
\newtheorem*{statementUkr*}{Statement}
\newtheorem*{corollaryUkr*}{Corollary}
\theoremstyle{definition}
\newtheorem*{definitionUkr*}{Definition}
\theoremstyle{remark}
\newtheorem*{notationUkr*}{Notation}
\newtheorem*{remarkUkr*}{Remark}
\theoremstyle{plain}
\newtheorem{theoremEng}{Theorem}
\newtheorem{lemmaEng}[theoremEng]{Lemma}
\newtheorem{propositionEng}[theoremEng]{Proposition}

\newtheorem{theorem}{Theorem}

\newtheorem{corollary}{Corollary}
\theoremstyle{definition}

\let\C\relax
\DeclareRobustCommand{\C}{\ensuremath{\mathbb{C}}}
\newcommand{\R}{\mathbb{R}}
\newcommand{\N}{\mathbb{N}}

\DeclareMathOperator{\Dom}{Dom}
\DeclareMathOperator{\Ran}{Ran}
\DeclareMathOperator{\Ker}{Ker}
\DeclareMathOperator{\supp}{supp}
\DeclareMathOperator{\AC}{AC}

\usepackage[mathlines]{lineno}
\newcommand*\patchAmsMathEnvironmentForLineno[1]{%
  \expandafter\let\csname old#1\expandafter\endcsname\csname #1\endcsname
  \expandafter\let\csname oldend#1\expandafter\endcsname\csname end#1\endcsname
  \renewenvironment{#1}%
     {\linenomath\csname old#1\endcsname}%
     {\csname oldend#1\endcsname\endlinenomath}%
}
\newcommand*\patchBothAmsMathEnvironmentForLineno[1]{%
  \patchAmsMathEnvironmentForLineno{#1}%
  \patchAmsMathEnvironmentForLineno{#1*}%
}
\AtBeginDocument{%
  \patchBothAmsMathEnvironmentForLineno{equation}%
  \patchBothAmsMathEnvironmentForLineno{align}%
  \patchBothAmsMathEnvironmentForLineno{flalign}%
  \patchBothAmsMathEnvironmentForLineno{alignat}%
  \patchBothAmsMathEnvironmentForLineno{gather}%
  \patchBothAmsMathEnvironmentForLineno{multline}%
}

\usepackage{xcolor}
\usepackage{hyperref}
\hypersetup{
    colorlinks=true,
    linkcolor={blue!60!black},  % Темно-синій для посилань
    citecolor={blue!50!black},  % Трохи інший відтінок для цитат
    urlcolor={blue!80!black}    % Яскравіший синій для веб-сайтів
}

\title[Semigroup properties of Schr\"{o}dinger operators]{Semigroup properties of Schr\"{o}dinger operators  with  complex singular potentials}

\author[V. Mikhailets]{Vladimir Mikhailets}

\address{Vladimir Mikhailets
         \\ \orcidlinkf{0000-0002-1332-1562}
         }

\address{Department of Mathematics \\
         King's College London \\
         2CR2LS \\
         United Kingdom}

\address{Institute of Mathematics of NAS of Ukraine \\
         3, Tereschenkivska Str. \\
         Kyiv-4 \\
         Ukraine \\
         01024}

\email[Vladimir Mikhailets]{mikhailets@imath.kiev.ua, vladimir.mikhailets@gmail.com}

\author[V. Molyboga]{Volodymyr Molyboga}

\address{Volodymyr Molyboga
        \\ \orcidlinkf{0000-0002-5683-5694}
         }

\address{Institute of Mathematics of NAS of Ukraine \\
         3, Tereschenkivska Str. \\
         Kyiv-4 \\
         Ukraine \\
         01024}

\email[Volodymyr Molyboga]{molyboga@imath.kiev.ua, vm.imath@gmail.com}

\keywords{Hilbert space, quasi-differential operator, Schr\"odinger operator, singular potential, Radon measure, complex potential, accretive operator, sectorial operator, $C$-symmetry, $C$-self-adjointness, continuous semigroup, uniqueness problem}

\subjclass[2020]{34B20, 34B24, 34L40, 47B44, 47D06}

\begin{document}

%\linenumbers % Вмикає нумерацію рядків для всього документа
%\modulolinenumbers[5]

%%%%%%%%%%%%%%%%%%%%%%%%%%%%%%%%%%%%%%%%%%%%%%%%%%%%%%%%%%%%%%%%%%%%%%%%%%%%%%%%%%%%%%%%%%%%%%%%%%%%%%%%%%%%%%%%%%%%%%%%%%%%
%%%%%%%%%%%%%%%%%%%%%%%%%%%%%%%%%%%%%%%%%%%%%%%%%%%%%%%%%%%%%%%%%%%%%%%%%%%%%%%%%%%%%%%%%%%%%%%%%%%%%%%%%%%%%%%%%%%%%%%%%%%%
\begin{abstract}
In the complex Hilbert space $L_2(\R_+)$ on the half-line $\R_+ = [0, \infty)$, we investigate non-symmetric Schr\"odinger operators
\[
Su = -u'' + qu, \qquad u(0) = 0,
\]
with complex-valued, highly singular distributional potentials
\[
q = s + Q' \in H_{\mathrm{loc}}^{-1}(\R_+), \qquad s \in L_{1,\mathrm{loc}}(\R_+), \quad Q \in L_{2,\mathrm{loc}}(\R_+),
\]
where the derivative $Q'$ is understood in the sense of Schwartz distributions. A rigorous operator-theoretic setting is established within the framework of Shin--Zettl quasi-derivatives. We prove that every accretive operator $S$ of this class is automatically $m$-accretive in $L_2(\R_+)$ and $C$-self-adjoint with respect to the complex conjugation involution $C : u \mapsto \bar{u}$. In particular, no additional boundary conditions at infinity are needed, meaning that a Weyl limit-point type situation always prevails for accretive realizations. Constructive local conditions on the coefficients $s$ and $Q$  are found under which the generated strongly continuous semigroup $\{e^{-tS}\}_{t \ge 0}$ is contractive or holomorphic in a sector of the complex plane.

\end{abstract}

%%%%%%%%%%%%%%%%%%%%%%%%%%%%%%%%%%%%%%%%%%%%%%%%%%%%%%%%%%%%%%%%%%%%%%%%%%%%%%%%%%%%%%%%%%%%%%%%%%%%%%%%%%%%%%%%%%%%%%%%%%%%
%%%%%%%%%%%%%%%%%%%%%%%%%%%%%%%%%%%%%%%%%%%%%%%%%%%%%%%%%%%%%%%%%%%%%%%%%%%%%%%%%%%%%%%%%%%%%%%%%%%%%%%%%%%%%%%%%%%%%%%%%%%%

\maketitle

%%%%%%%%%%%%%%%%%%%%%%%%%%%%%%%%%%%%%%%%%%%%%%%%%%%%%%%%%%%%%%%%%%%%%%%%%%%%%%%%%%%%%%%%%%%%%%%%%%%%%%%%%%%%%%%%%%%%%%%%%%%%
%%%%%%%%%%%%%%%%%%%%%%%%%%%%%%%%%%%%%%%%%%%%%%%%%%%%%%%%%%%%%%%%%%%%%%%%%%%%%%%%%%%%%%%%%%%%%%%%%%%%%%%%%%%%%%%%%%%%%%%%%%%

\section{Introduction}

Let $H$ be a complex Hilbert space with inner product $(\cdot, \cdot)_H$, and let $T$ be a linear operator in $H$ with numerical range 
\[
\Theta(T) := \left\{ (Tx, x)_H \in \C \;\middle|\; \|x\|_H = 1, \; x \in \Dom(T) \right\}.
\]
An operator $T$ is called \emph{accretive} if $\Theta(T) \subset \{\lambda \in \C \mid \mathrm{Re}\,\lambda \ge 0\}$, and \emph{sectorial} with respect to the sector $\Lambda_\delta := \{\lambda \in \C \mid |\arg \lambda| \le \frac{\pi}{2} - \delta\}$, $\delta > 0$, if $\Theta(T) \subset \Lambda_\delta$. 

An operator $T$ is called \emph{maximally accretive} if it admits no proper accretive extensions in $H$, and \emph{$m$-accretive} if the open left half-plane $\{\lambda \in \C \mid \mathrm{Re}\,\lambda < 0\}$ is entirely contained in the resolvent set $\rho(T)$. For densely defined or closable operators, these two concepts are equivalent \cite{Phl1959}. By the celebrated Phillips generation theorem \cite{Phl1959} and the classical Lumer--Phillips theorem, the $m$-accretivity of a densely defined operator $T$ is a necessary and sufficient condition for $-T$ to be the infinitesimal generator of a strongly continuous semigroup of contractions $\{e^{-tT}\}_{t \ge 0}$:
\[
\|e^{-tT}\|_H \le 1, \qquad t \ge 0.
\]
If, in addition, the numerical range of the operator is contained in a sector $\Lambda_\delta$, then the operator $T$ is called \emph{$m$-sectorial}, and the semigroup $\{e^{-tT}\}_{t \ge 0}$ generated by $-T$ admits an analytic continuation to a sector containing the positive half-axis, i.e., it is a \emph{holomorphic semigroup} (see, e.g., \cite{Kato1995}).

In various problems of modern mathematical physics---such as quantum mechanics with absorption, wave propagation in highly heterogeneous or dissipative media, and diffusion theory---differential operators with complex singular coefficients arise naturally. As a rule, such singular coefficients can be interpreted as Schwartz distributions belonging to certain Sobolev spaces. The spectral and semigroup analysis of such operators is notoriously delicate due to fundamental mathematical difficulties related to the definition of products of distributions and the loss of standard smoothness of functions in their domains.

In the present paper, we investigate non-self-adjoint Schr\"odinger operators in the complex Hilbert space $L_2(\R_+)$, where $\R_+ := [0, \infty)$, associated with the formal differential expression
\begin{equation*}\label{eq:1.1}
Su := l[u], \qquad l[u] := -u'' + qu, \qquad x \in (0, \infty),
\end{equation*}
subject to the homogeneous Dirichlet boundary condition at the origin:
\[
u(0) = 0.
\]
The potential $q$ is assumed to be complex-valued and to belong to the negative Sobolev class $H_{\mathrm{loc}}^{-1}(\R_+)$, admitting the representation
\begin{equation}\label{eq:1.2}
q = s + Q', \qquad s \in L_{1,\mathrm{loc}}(\R_+), \quad Q \in L_{2,\mathrm{loc}}(\R_+),
\end{equation}
where the derivative $Q'$ is understood in the distributional sense, and the locality condition means that $s \in L_1[0, a]$ and $Q \in L_2[0, a]$ for every $a > 0$. 

In particular, if the function $Q$ has bounded variation on every compact interval of $\R_+$ ($Q \in BV_{\mathrm{loc}}(\R_+)$), then the potential $q$ is a complex Radon measure. If the singular continuous component of this measure vanishes, one arrives at the widely studied class of point interactions, see, for instance, \cite{AlbGszHKH2005, AlKsMl2010, Brsch1985} and references therein.

The paper is structured as follows.

In Section~\ref{sec:Prlm}, we gather preliminary results regarding operators associated in $L_2(\R_+)$ with the formal expression $l$ and its formal Lagrange adjoint $l^+$. The cornerstone of this section is Theorem~\ref{thm:2.1}, which establishes the essential structural properties of maximal, minimal, and preminimal operators, their adjoint relations, domain embeddings into local Sobolev spaces, and their $C$-symmetry properties.

Section~\ref{sec:MnRslts} contains the statements of the main theorems: Theorem~~\ref{thm:3.1} provides constructive local conditions for accretivity and sectoriality; Theorem~\ref{thm:3.2} establishes the global uniqueness and $m$-accretivity theorem; Theorem~\ref{thm:3.3} describes the $C$-self-adjointness of the minimal operator.

Section~\ref{sec:thm3.1} is devoted to the proof of Theorem~\ref{thm:3.1}.

Section~\ref{sec:thm3.2} contains the proof of Theorem~\ref{thm:3.2} based on a localized cut-off technique and the complete cancellation of singular potential terms in the commutator identity.

%%%%%%%%%%%%%%%%%%%%%%%%%%%%%%%%%%%%%%%%%%%%%%%%%%%%%%%%%%%%%%%%%%%%%%%%%%%%%%%%%%%%%%%%%%%%%%%%%%%%%%%%%%%%%%%%%%%%%%%%%%%%%%%%%%%%%%%%%%%%%%%%%%%%%
%%%%%%%%%%%%%%%%%%%%%%%%%%%%%%%%%%%%%%%%%%%%%%%%%%%%%%%%%%%%%%%%%%%%%%%%%%%%%%%%%%%%%%%%%%%%%%%%%%%%%%%%%%%%%%%%%%%%%%%%%%%%%%%%%%%%%%%%%%%%%%%%%%%%%
\section{Preliminary Results}\label{sec:Prlm}

To rigorously define Schr\"odinger operators with distributional potentials $q \in H_{\mathrm{loc}}^{-1}(\R_+)$, we employ the theory of Shin--Zettl quasi-differential operators, originated in the pioneering works of D.~Shin \cite{Shin1938_I, Shin1938_II, Shin1938_III, Shin1940, Shin1943} and systematically developed by A.~Zettl \cite{Zttl1975} (see also the monographs \cite{EvMrksBook1999, Wei1987}). In recent years, this framework has found fruitful applications in the spectral analysis of differential operators with singular coefficients and measures \cite{EcGsNcTs2013, GszNclZnch2024, GrnMkh2010, GrnMkhPnk2013, HrMk2012, MkhGrnMlb2022, MkhMlb2013_mfat19n1, MkhMlb2013_mfat19n2, MkhMlb2018, MkhMlb2026}.

Under condition \eqref{eq:1.2}, one defines the associated Shin--Zettl matrix \cite{EvMrksBook1999, Shin1943, Zttl1975}:
\begin{equation*}\label{eq:2.1}
A(x) := \begin{pmatrix} Q(x) & 1 \\ -(Q^2(x) - s(x)) & -Q(x) \end{pmatrix} \in M_2(L_{1,\mathrm{loc}}(\R_+)).
\end{equation*}
The matrix $A(x)$ defines the quasi-derivatives of a function $u$ via:
\begin{align}
u^{[0]} &:= u,  \notag  \\ %\label{eq:2.2-0}
u^{[1]} &:= u' - Qu, \label{eq:2.2-1}\\
u^{[2]} &:= (u^{[1]})' + Qu^{[1]} + (Q^2 - s)u. \label{eq:2.2-2}
\end{align}
Then the formal differential expression $l[\cdot]$ is rigorously realized as the quasi-differential expression:
\begin{equation}\label{eq:2.2}
l[u] := -u^{[2]}, \qquad \Dom(l) := \left\{ u : \R_+ \to \C \;\middle|\; u, u^{[1]} \in \AC_{\mathrm{loc}}(\R_+) \right\}.
\end{equation}
Notice that when the potential $q$ is regular (e.g., continuous), expanding the brackets in \eqref{eq:2.2-2} shows that $l[u] = -(u' - Qu)' - Q(u' - Qu) - (Q^2 - s)u = -u'' + Q'u + Qu' - Qu' + Q^2u - Q^2u + su = -u'' + (s + Q')u$, which coincides with the classical expression $-u'' + qu$. In the general case, the equality holds in the sense of Schwartz distributions:
\[
\langle -u^{[2]}, \varphi \rangle = \langle -u'' + qu, \varphi \rangle \qquad \forall \varphi \in C_0^\infty(0, \infty).
\]

In the Hilbert space $L_2(\R_+)$, the quasi-differential expression \eqref{eq:2.2} gives rise to:

\noindent -- the \textit{maximal operator} $S$:
\begin{equation*}\label{eq:S-def}
  \begin{aligned}
    Su &:= l[u],\\
    \Dom(S) &:= \left\{ u \in L_2(\R_+) \;\middle|\; u, u^{[1]} \in \AC_{\mathrm{loc}}(\R_+), \; u(0) = 0, \; l[u] \in L_2(\R_+) \right\};
  \end{aligned}
\end{equation*}
-- the \textit{preminimal operator} $\dot{S}_0$, defined as the restriction of $S$ to compactly supported functions:
\begin{equation*}\label{eq:S0dot-def}
    \dot{S}_0 u := l[u], \quad \Dom(\dot{S}_0) := \left\{ u \in \Dom(S) \;\middle|\; \supp u \Subset \R_+ \right\},
\end{equation*}
where $\supp u \Subset \R_+$ means that there exists $b > 0$ such that $u(x) = 0$ for all $x \ge b$.

The formal Lagrange adjoint expression $l^+$ is determined by the adjoint matrix $A^+(x) := -J^{-1}A^*(x)J$, where $J = \begin{pmatrix} 0 & -1 \\ 1 & 0 \end{pmatrix}$ and $A^* = \overline{A}^T$. A direct computation yields the adjoint quasi-derivatives:
\begin{align*}
v^{\{0\}} &:= v, \\ %\label{eq:2.3-0}
v^{\{1\}} &:= v' - \overline{Q}v, \\ %\label{eq:2.3-1}
v^{\{2\}} &:= (v^{\{1\}})' + \overline{Q}v^{\{1\}} + (\overline{Q}^2 - \overline{s})v, % \label{eq:2.3-2}
\end{align*}
giving rise to the adjoint expression
\begin{equation}\label{eq:2.3}
l^+[v] := -v^{\{2\}}, \qquad \Dom(l^+) := \left\{ v : \R_+ \to \C \;\middle|\; v, v^{\{1\}} \in \AC_{\mathrm{loc}}(\R_+) \right\}.
\end{equation}
The corresponding maximal and preminimal operators generated by $l^+$ in $L_2(\R_+)$ are:
\begin{align*}
S^+ v &:= l^+[v], \notag\\
\Dom(S^+) &:= \left\{ v \in L_2(\R_+) \;\middle|\; v, v^{\{1\}} \in \AC_{\mathrm{loc}}(\R_+), \; v(0) = 0, \; l^+[v] \in L_2(\R_+) \right\}, \\ %\label{eq:Splus-def}
\dot{S}_0^+ v &:= l^+[v], \quad \Dom(\dot{S}_0^+) := \left\{ v \in \Dom(S^+) \;\middle|\; \supp v \Subset \R_+ \right\}. %\label{eq:S0plusdot-def}
\end{align*}

A fundamental tool in the analysis of these operators is the Lagrange identity \cite{EvMrksBook1999, Zttl1975}. For any $u \in \Dom(S)$ and $v \in \Dom(S^+)$ on any finite interval $[a, b] \subset \R_+$,
\begin{equation}\label{eq:Lagrange}
\int_a^b l[u]\bar{v}\, dx - \int_a^b u\,\overline{l^+[v]}\, dx = [u, v]_a^b := [u, v](b) - [u, v](a),
\end{equation}
where the sesquilinear boundary form is defined by
\begin{equation*}\label{eq:boundary-form}
[u, v](x) := u(x)\overline{v^{\{1\}}(x)} - u^{[1]}(x)\overline{v(x)}.
\end{equation*}
Moreover, for any $u \in \Dom(S)$ and $v \in \Dom(S^+)$, the limit as $b \to \infty$ exists and is finite:
\begin{equation*}\label{eq:limit-infty}
[u, v](\infty) := \lim_{x \to \infty} [u, v](x).
\end{equation*}
According to \cite{EvMrksBook1999, Zttl1975}, the domains of all four operators $S, \dot{S}_0, S^+, \dot{S}_0^+$ are dense in $L_2(\R_+)$.

The following theorem summarizes the fundamental structural properties of these operators.

\begin{theorem}[Structural Properties of Schr\"odinger Operators]\label{thm:2.1}
Let $q = s + Q'$ with $s \in L_{1,\mathrm{loc}}(\R_+)$ and $Q \in L_{2,\mathrm{loc}}(\R_+)$. Then the operators $S, \dot{S}_0, S^+, \dot{S}_0^+$ satisfy the following properties:
\begin{enumerate}
    \item[$1^\circ$.] Adjoint Relations:
    \begin{equation*}\label{eq:adjoints}
    (\dot{S}_0)^* = S^+, \qquad (\dot{S}_0^+)^* = S.
    \end{equation*}
    \item[$2^\circ$.] Closedness and Closability: The maximal operators $S$ and $S^+$ are closed. The preminimal operators $\dot{S}_0$ and $\dot{S}_0^+$ are closable, and their closures define the minimal operators:
    \begin{equation*}\label{eq:min-def}
    S_0 := \overline{\dot{S}_0}, \qquad S_0^+ := \overline{\dot{S}_0^+},
    \end{equation*}
    satisfying
    \begin{equation*}\label{eq:min-adjoints}
    (S_0)^* = S^+, \qquad (S_0^+)^* = S.
    \end{equation*}
    \item[$3^\circ$.] Characterization of Minimal Domains:
    \begin{align}
    \Dom(S_0) &= \left\{ u \in \Dom(S) \;\middle|\; [u, v](\infty) = 0 \quad \forall v \in \Dom(S^+) \right\}, \label{eq:DomS0}\\
    \Dom(S_0^+) &= \left\{ v \in \Dom(S^+) \;\middle|\; [u, v](\infty) = 0 \quad \forall u \in \Dom(S) \right\}. \label{eq:DomS0plus}
    \end{align}
    \item[$4^\circ$.] Sobolev Embeddings:
    \begin{equation*}\label{eq:Sobolev}
    \Dom(S), \Dom(S^+) \subset W_{2,\mathrm{loc}}^1(\R_+), \qquad \Dom(\dot{S}_0), \Dom(\dot{S}_0^+) \subset W_{2,\mathrm{comp}}^1(\R_+).
    \end{equation*}
    \item[$5^\circ$.] Action of Complex Conjugation: Let $C : L_2(\R_+) \to L_2(\R_+)$ denote the anti-linear involution of complex conjugation: $(Cu)(x) := \overline{u(x)}$. Then $C$ is an anti-linear unitary involution ($C = C^* = C^{-1}$) mapping:
    \begin{equation*}\label{eq:C-action}
    C \Dom(\dot{S}_0) = \Dom(\dot{S}_0^+), \quad C \Dom(S_0) = \Dom(S_0^+), \quad C \Dom(S) = \Dom(S^+).
    \end{equation*}
    In particular, if $Q$ and $s$ are real-valued, then $\Dom(\dot{S}_0) = \Dom(\dot{S}_0^+)$, $\Dom(S_0) = \Dom(S_0^+)$, and $\Dom(S) = \Dom(S^+)$.
    \item[$6^\circ$.] $C$-Symmetry: The operators $S_0$ and $S_0^+$ are $C$-symmetric:
    \begin{equation*}\label{eq:C-symm}
    C S_0 C  \subset (S_0)^*, \qquad C S_0^+ C  \subset  (S_0^+)^*.
    \end{equation*}
      \item[$7^\circ$.] Preminimal Quadratic Form: The quadratic form $\dot{t}[u] := (\dot{S}_0 u, u)_{L_2}$ is densely defined on $\Dom(\dot{t}) := \Dom(\dot{S}_0)$ and admits the exact representation:
    \begin{equation}\label{eq:form-repr}
    \dot{t}[u] = \int_{\R_+} |u'|^2 dx - \int_{\R_+} Q(u'\bar{u} + u\bar{u}') dx + \int_{\R_+} s|u|^2 dx.
    \end{equation}
\end{enumerate}
\end{theorem}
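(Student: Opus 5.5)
Most of Theorem~\ref{thm:2.1} will be obtained by specializing the general Shin--Zettl theory of second-order quasi-differential operators \cite{EvMrksBook1999, Zttl1975} to the matrix $A(x)$. The only additional ingredients are an explicit conjugation argument and an integration by parts. The structure of the argument is as follows.

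\emph{Items $1^\circ$--$3^\circ$.} Since $A\in M_2(L_{1,\mathrm{loc}}(\R_+))$ and the endpoint $0$ is regular, the Lagrange identity \eqref{eq:Lagrange} together with the existence and uniqueness theorem for the initial value problem for the system $Y'=AY$, $Y=(u,u^{[1]})^T$, holds on every compact interval. The classical Naimark--Zettl argument then applies. One first checks directly from \eqref{eq:Lagrange} that $S^+\subset(\dot S_0)^*$: if $u\in\Dom(\dot S_0)$, then $u(0)=v(0)=0$ and $u$ vanishes near $b$, so the boundary terms cancel.

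For the reverse inclusion, let $w\in\Dom((\dot S_0)^*)$. On $[0,b]$ one solves $l^+[z]=(\dot S_0)^*w$, which is possible because the system for $l^+$ also has locally integrable coefficients. One then shows that $w-z$ lies in the span of solutions of $l^+[y]=0$ by the standard annihilator lemma: the range of $\dot S_0$ restricted to functions supported in $(0,b)$ is orthogonal in $L_2(0,b)$ exactly to the kernel of $l^+$ on $[0,b]$. The conditions $v(0)=0$ and $v^{\{1\}}$ at $0$ are recovered by testing against $u$ with $u(0)=0$ and $u^{[1]}(0)\ne0$.

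Closedness of $S$ and $S^+$ then follows, since adjoints are closed. Because $S^+$ is densely defined, $\dot S_0$ is closable, and $(S_0)^*=(\dot S_0)^*=S^+$. Item $3^\circ$ follows from $S_0=(S^+)^*$ combined with \eqref{eq:Lagrange} letting $b\to\infty$. The boundary term at $0$ vanishes because both $u(0)=0$ and $v(0)=0$, which gives $(Su,v)-(u,S^+v)=[u,v](\infty)$. Hence $u\in\Dom(S_0)$ if and only if this quantity vanishes for all $v\in\Dom(S^+)$.

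\emph{Item $4^\circ$.} If $u\in\Dom(S)$, then $u,u^{[1]}$ are locally absolutely continuous. Therefore $u'=u^{[1]}+Qu$ belongs to $L_{2,\mathrm{loc}}$, since $Qu\in L_{2,\mathrm{loc}}$ because $u$ is locally bounded. The compact-support statement is immediate.

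\emph{Items $5^\circ$--$6^\circ$.} For $5^\circ$, conjugating \eqref{eq:2.2-1}--\eqref{eq:2.2-2} gives $\overline{u}^{\{1\}}=\overline{u^{[1]}}$ and $l^+[\overline u]=\overline{l[u]}$. This yields the equalities for $\dot S_0$ and $S$, and for $S_0$ by taking closures, since $C$ is isometric. For $6^\circ$, the relation $C\dot S_0C=\dot S_0^+$ extends by closure to $CS_0C=S_0^+$. The inclusion $S_0^+\subset (S^+_0)^{**}$ then reduces to showing $S_0^+\subset S^+=(S_0)^*$, which is clear because $S_0^+$ is the closure of a restriction of the closed operator $S^+$. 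The second inclusion is symmetric.

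\emph{Item $7^\circ$.} For $u\in\Dom(\dot S_0)$, write $-u^{[2]}\bar u=-(u^{[1]})'\bar u-Qu^{[1]}\bar u-(Q^2-s)|u|^2$. Integrating the first term by parts gives no boundary terms, because $u(0)=0$ and $u$ has compact support, and it produces $\int u^{[1]}\bar u'$. Substituting $u^{[1]}=u'-Qu$ gives
\[
\int|u'|^2-\int Qu\bar u'-\int Qu'\bar u+\int Q^2|u|^2-\int Q^2|u|^2+\int s|u|^2 ,
\]
which is \eqref{eq:form-repr}. All integrals are finite by $4^\circ$.

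The main obstacle is the reverse inclusion in $1^\circ$. The argument there must be carried out in the quasi-derivative setting without assuming smoothness of $u$, so the annihilator lemma has to be established via the variation-of-constants formula for the first-order system, rather than via any classical differentiability of $u$.
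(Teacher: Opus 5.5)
Your proposal is correct and follows the paper's proof essentially step by step. The one difference in $1^\circ$ is that you re-derive the regular-interval adjoint relation on $[0,b]$ via Naimark's annihilator lemma, whereas the paper cites Zettl's Theorems 10--11 on interior intervals $[\alpha,\beta]\subset(0,\infty)$ and then extends to $x=0$ by Carath\'eodory. The other difference, in $5^\circ$, is that you obtain $C\Dom(S_0)=\Dom(S_0^+)$ by transporting graph closures under the isometry $C$, where the paper uses the boundary-form characterization.

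Two small slips. In $6^\circ$ the inclusion to be proved is $CS_0C=S_0^+\subset (S_0)^*=S^+$, not ``$S_0^+\subset (S_0^+)^{**}$''. In $1^\circ$, testing at $0$ yields only $v(0)=0$; no condition on $v^{\{1\}}(0)$ arises, since $\Dom(S^+)$ imposes none.
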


\begin{corollary}\label{cor:2.2}
The preminimal operators $\dot{S}_0$ and $\dot{S}_0^+$ are accretive simultaneously. Furthermore, for every $u \in \Dom(\dot{S}_0)$ and $v := Cu \in \Dom(\dot{S}_0^+)$, their quadratic forms share identical real parts:
\begin{equation*}\label{eq:real-parts}
\mathrm{Re}\,(\dot{S}_0 u, u)_{L_2(\R_+)} = \mathrm{Re}\,(\dot{S}_0^+ v, v)_{L_2(\R_+)}.
\end{equation*}
\end{corollary}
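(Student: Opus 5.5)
The approach is a direct computation with the explicit form representation \eqref{eq:form-repr} from Theorem~\ref{thm:2.1}, $7^\circ$, applied to both $\dot{S}_0$ and $\dot{S}_0^+$, together with the conjugation property $C\Dom(\dot{S}_0)=\Dom(\dot{S}_0^+)$ from $5^\circ$.

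First, we note that $\dot{S}_0^+$ is the preminimal operator for the expression $l^+$. This expression has exactly the same quasi-differential structure as $l$, with $(s,Q)$ replaced by $(\overline{s},\overline{Q})$. Hence $7^\circ$, applied to the pair $(\overline{s},\overline{Q})$, gives for every $v\in\Dom(\dot{S}_0^+)$
\[
(\dot{S}_0^+v,v)_{L_2} = \int_{\R_+}|v'|^2dx - \int_{\R_+}\overline{Q}(v'\bar v + v\bar v')\,dx + \int_{\R_+}\overline{s}|v|^2dx.
\]
All integrals are finite, since $v\in W^1_{2,\mathrm{comp}}(\R_+)$ by $4^\circ$ and $Q\in L_{2,\mathrm{loc}}$, $s\in L_{1,\mathrm{loc}}$.

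Next, take $u\in\Dom(\dot{S}_0)$ and $v=Cu=\bar u$, which lies in $\Dom(\dot{S}_0^+)$ by $5^\circ$. Then $|v'|^2=|u'|^2$, $|v|^2=|u|^2$, and $v'\bar v+v\bar v' = \bar u'u+\bar u u' = u'\bar u+u\bar u'$. Therefore
\[
(\dot{S}_0^+v,v)_{L_2} = \int|u'|^2dx - \int\overline{Q}\,(u'\bar u+u\bar u')\,dx + \int\overline{s}|u|^2dx = \overline{(\dot{S}_0u,u)_{L_2}},
\]
because $u'\bar u+u\bar u' = 2\,\mathrm{Re}(u'\bar u)$ and $|u|^2$, $|u'|^2$ are real. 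Taking real parts gives the identity in the statement.

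Finally, $C$ is a bijection between $\Dom(\dot{S}_0)$ and $\Dom(\dot{S}_0^+)$, with $C^{-1}=C$. So the set of values $\mathrm{Re}(\dot{S}_0u,u)$ with $\|u\|=1$ coincides with the set of values $\mathrm{Re}(\dot{S}_0^+v,v)$ with $\|v\|=1$, using $\|Cu\|=\|u\|$. Hence one operator is accretive if and only if the other is.

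The only delicate point is justifying that $7^\circ$ applies verbatim to $l^+$. This requires checking that the adjoint quasi-derivatives $v^{\{1\}},v^{\{2\}}$ are exactly the Shin--Zettl quasi-derivatives built from the matrix $A$ with $Q,s$ replaced by $\overline{Q},\overline{s}$. This is immediate from their displayed formulas, and $\overline{Q}\in L_{2,\mathrm{loc}}$, $\overline{s}\in L_{1,\mathrm{loc}}$, so Theorem~\ref{thm:2.1} applies to this pair.

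Alternatively, one can argue without $7^\circ$: a direct check shows $l^+[\bar u]=\overline{l[u]}$, hence $(\dot{S}_0^+\bar u,\bar u)=\int\overline{l[u]}\,u\,dx=\overline{(\dot{S}_0u,u)}$. This version bypasses the form representation entirely.
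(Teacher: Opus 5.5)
Your proof is correct and is essentially the argument the paper intends: the corollary is read off from Theorem~\ref{thm:2.1}, combining $5^\circ$ (the conjugation $C$ maps $\Dom(\dot{S}_0)$ isometrically onto $\Dom(\dot{S}_0^+)$ with $l^+[\bar u]=\overline{l[u]}$) with the form representation $7^\circ$, which gives $(\dot{S}_0^+Cu,Cu)_{L_2}=\overline{(\dot{S}_0u,u)_{L_2}}$. This conjugation identity is slightly stronger than the stated equality of real parts, since it also preserves $|\mathrm{Im}\,(\cdot,\cdot)|$, and that is exactly what justifies transferring the sectorial bound from $\dot{S}_0$ to $\dot{S}_0^+$ in Theorem~\ref{thm:3.1}(II).
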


Recall that an operator $T$ in a Hilbert space $H$ is called \emph{quasi-accretive} if there exists $\alpha \in \R$ such that $T + \alpha I$ is accretive (and \emph{quasi-$m$-accretive} if $T + \alpha I$ is $m$-accretive). An operator $T$ is called \emph{sectorial} with vertex $\gamma \in \R$ and semi-angle $\theta \in [0, \pi/2)$ if its numerical range satisfies $\Theta(T - \gamma I) \subset \Lambda_\theta$, which is equivalent to the inequality:
\begin{equation*}\label{eq:sectorial-def}
\mathrm{Re}\,(Tu, u)_H - k |\mathrm{Im}\,(Tu, u)_H| \ge \gamma \|u\|_H^2 \qquad \forall u \in \Dom(T),
\end{equation*}
where $k := \cot \theta > 0$. An operator is called \emph{$m$-sectorial} if it is sectorial and quasi-$m$-accretive \cite{Kato1995, Schm2012}.

%%%%%%%%%%%%%%%%%%%%%%%%%%%%%%%%%%%%%%%%%%%%%%%%%%%%%%%%%%%%%%%%%%%%%%%%%%%%%%%%%%%%%%%%%%%%%%%%%%%%%%%%%%%%%%%%%%%%%%%%%%%%%%%%%%%%%%%%%%%%%%%%%%%%%
%%%%%%%%%%%%%%%%%%%%%%%%%%%%%%%%%%%%%%%%%%%%%%%%%%%%%%%%%%%%%%%%%%%%%%%%%%%%%%%%%%%%%%%%%%%%%%%%%%%%%%%%%%%%%%%%%%%%%%%%%%%%%%%%%%%%%%%%%%%%%%%%%%%%%
\subsection{Regular Quasi-Differential Operators on Finite Intervals}

Let $\Delta = [\alpha, \beta] \subset \R_+$ be a compact interval. In $L_2(\Delta)$, the quasi-differential expressions $l_\Delta$ and $l_\Delta^+$ are defined by the formulas \eqref{eq:2.2} and \eqref{eq:2.3} restricted to $\Delta$. They generate the maximal and minimal regular operators:
\begin{align*}
S_\Delta u &:= l_\Delta[u], \quad \Dom(S_\Delta) := \left\{ u \in L_2(\Delta) \;\middle|\; u, u^{[1]} \in \AC(\Delta), \; l_\Delta[u] \in L_2(\Delta) \right\}, \\ %\label{eq:S-Delta}
S_{0,\Delta} u &:= l_\Delta[u], \quad \Dom(S_{0,\Delta}) := \left\{ u \in \Dom(S_\Delta) \;\middle|\; u^{[j]}(\alpha) = u^{[j]}(\beta) = 0, \; j = 0, 1 \right\}, % \label{eq:S0-Delta}
\end{align*}
and correspondingly for the adjoint:
\begin{align*}
S_\Delta^+ v &:= l_\Delta^+[v], \quad \Dom(S_\Delta^+) := \left\{ v \in L_2(\Delta) \;\middle|\; v, v^{\{1\}} \in \AC(\Delta), \; l_\Delta^+[v] \in L_2(\Delta) \right\}, \\ %\label{eq:Splus-Delta}
S_{0,\Delta}^+ v &:= l_\Delta^+[v], \quad \Dom(S_{0,\Delta}^+) := \left\{ v \in \Dom(S_\Delta^+) \;\middle|\; v^{\{j\}}(\alpha) = v^{\{j\}}(\beta) = 0, \; j = 0, 1 \right\}. %\label{eq:S0plus-Delta}
\end{align*}

\begin{propositionEng}[Zettl \cite{Zttl1975}, Theorems 10, 11]\label{prop:regular}
The minimal operators $S_{0,\Delta}$ and $S_{0,\Delta}^+$ are densely defined, closed operators in $L_2(\Delta)$, and the exact adjoint relations hold:
\begin{equation*}\label{eq:regular-adj}
(S_{0,\Delta})^* = S_\Delta^+, \qquad (S_{0,\Delta}^+)^* = S_\Delta, \qquad (S_\Delta)^* = S_{0,\Delta}^+, \qquad (S_\Delta^+)^* = S_{0,\Delta}.
\end{equation*}
\end{propositionEng}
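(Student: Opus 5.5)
The statement is Zettl's result for regular quasi-differential expressions on a compact interval $\Delta=[\alpha,\beta]$, where all entries of the Shin--Zettl matrix $A$ are integrable. The plan is the classical one: reduce everything to the theory of the first-order system $Y' = AY + F$ with $L_1$ coefficients, and then to a finite-codimension (``Fredholm alternative'') argument.

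\textbf{First-order system.} For $u\in\Dom(l_\Delta)$ set $Y=(u,u^{[1]})^T$. The equation $l_\Delta[u]=f$ is equivalent to
\[
Y' = AY + (0,-f)^T .
\]
Since $A\in M_2(L_1(\Delta))$, Carathéodory theory gives, for every $f\in L_1(\Delta)$ and every initial vector at a point of $\Delta$, a unique absolutely continuous solution. There is a fundamental matrix $\Phi$ with $\det\Phi\equiv 1$, because $\operatorname{tr}A=0$, and the solutions are given by variation of constants. The same holds for $l_\Delta^+$ with $A^+=-J^{-1}A^*J$.

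The Lagrange identity \eqref{eq:Lagrange} holds on $\Delta$. It is obtained by direct differentiation of $[u,v]$ using the quasi-derivative definitions; the $Q$ and $s$ terms cancel precisely because $A^+$ is the $J$-adjoint of $A$.

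\textbf{The inclusion $S_\Delta^+\subset(S_{0,\Delta})^*$.} Take $u\in\Dom(S_{0,\Delta})$ and $v\in\Dom(S_\Delta^+)$. Both boundary terms vanish, since $u(\alpha)=u^{[1]}(\alpha)=u(\beta)=u^{[1]}(\beta)=0$. Hence $(S_{0,\Delta}u,v)=(u,S_\Delta^+v)$.

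\textbf{The reverse inclusion (main obstacle).} Let $v\in\Dom((S_{0,\Delta})^*)$ and $g:=(S_{0,\Delta})^*v$.

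1. Solve $l^+[w]=g$ with some $w\in\Dom(S_\Delta^+)$. Then $(S_{0,\Delta}u,v-w)=0$ for all $u\in\Dom(S_{0,\Delta})$.
2. Characterise the range of $S_{0,\Delta}$. By variation of constants, $f\in L_2(\Delta)$ equals $l[u]$ for some $u$ satisfying the four boundary conditions if and only if $f\perp\Ker(S_\Delta^+)$. This kernel is the two-dimensional space of solutions of $l^+[z]=0$. One direction is Lagrange. For the converse, take $u$ solving $l[u]=f$ with zero data at $\alpha$. Lagrange applied against the two basis solutions $z_1,z_2$ gives $[u,z_j](\beta)=0$ for $j=1,2$. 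The pair $(z_j(\beta),z_j^{\{1\}}(\beta))$ spans $\C^2$, so $u(\beta)=u^{[1]}(\beta)=0$.
3. Conclude. We get $v-w\in(\Ran S_{0,\Delta})^\perp=\Ker S_\Delta^+$, so $v\in\Dom(S_\Delta^+)$.

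This step also yields density of $\Dom(S_{0,\Delta})$. If $h\perp\Dom(S_{0,\Delta})$, solve $l^+[z]=h$. Lagrange then gives $(l[u],z)=(u,h)=0$ for every $u\in\Dom(S_{0,\Delta})$. So $z$ is orthogonal to $\Ran S_{0,\Delta}=(\Ker S_\Delta^+)^\perp$, hence $z\in\Ker S_\Delta^+$ and $h=l^+[z]=0$.

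\textbf{Closedness and the remaining relations.} Since $(S_{0,\Delta})^*=S_\Delta^+$ with $S_{0,\Delta}$ densely defined, $S_\Delta^+$ is closed. The symmetric argument with the roles of $l$ and $l^+$ exchanged gives $(S_{0,\Delta}^+)^*=S_\Delta$.

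For closedness of $S_{0,\Delta}$, show $S_{0,\Delta}=(S_\Delta^+)^*$, i.e. $S_{0,\Delta}=(S_{0,\Delta})^{**}$.
1. The operator $(S_\Delta^+)^*\supset S_{0,\Delta}$ is contained in $(S_{0,\Delta}^+)^*=S_\Delta$.
2. For $u$ in its domain, Lagrange with arbitrary $v\in\Dom(S_\Delta^+)$ gives $[u,v](\beta)-[u,v](\alpha)=0$.
3. Choose $v$ with arbitrarily prescribed values $(v,v^{\{1\}})$ at $\alpha$ and $\beta$ independently. This is possible by gluing solutions with a smooth cutoff, or by solving $l^+[v]=\phi$ for suitable $\phi$. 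Since $(v(x),v^{\{1\}}(x))$ then range over all of $\C^2$, the identity forces $u(\alpha)=u^{[1]}(\alpha)=u(\beta)=u^{[1]}(\beta)=0$.

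Thus $(S_\Delta^+)^*=S_{0,\Delta}$, which is therefore closed, and likewise $(S_\Delta)^*=S_{0,\Delta}^+$.

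The delicate point is step 2 of the reverse inclusion: the variation-of-constants surjectivity with only $L_1$ coefficients. It requires $\Phi$ and $\Phi^{-1}$ to be bounded on $\Delta$, which holds since $\Phi$ is absolutely continuous with $\det\Phi\equiv1$.
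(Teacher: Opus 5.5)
Your proposal is correct. The paper does not prove this proposition but only quotes Zettl, so the relevant comparison is with the classical Naimark-type argument behind Zettl's Theorems 10--11, and you reproduce it. The identity $\Ran S_{0,\Delta}=(\Ker S_\Delta^+)^\perp$ gives density and $(S_{0,\Delta})^*=S_\Delta^+$, and a patching lemma for the boundary values of quasi-derivatives gives $(S_\Delta^+)^*=S_{0,\Delta}$. Your smooth-cutoff version of the patching step is legitimate here because the $(1,2)$ entry of $A^+$ equals $1$ and $Q\in L_2(\Delta)$; exactly as in Lemma~\ref{lem:cut-off}, this gives $l^+[\chi v_1]=\chi\, l^+[v_1]-\chi'' v_1-2\chi' v_1'\in L_2(\Delta)$.
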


\subsection{Proof of Theorem~\ref{thm:2.1}}

We establish each statement of Theorem~\ref{thm:2.1} systematically, compare with \cite{EvMrksBook1999, NmrkRus1969, Zttl1975}.

\smallskip
\noindent \emph{Proof of Statement $1^\circ$} (Adjoint Relations). 
We prove $(\dot{S}_0)^* = S^+$.

\emph{Step 1: The inclusion $S^+ \subset (\dot{S}_0)^*$.} Let $u \in \Dom(\dot{S}_0)$ and $v \in \Dom(S^+)$. Since $\supp u \Subset \R_+$, there exists $b > 0$ such that $u(x) = 0$ for all $x \ge b$. Since $u, u^{[1]} \in \AC_{\mathrm{loc}}(\R_+)$, we have $u(b) = u^{[1]}(b) = 0$. Applying the Lagrange identity \eqref{eq:Lagrange} on $[0, b]$ yields:
\[
(\dot{S}_0 u, v)_{L_2} - (u, S^+ v)_{L_2} = \int_0^b \left( l[u]\bar{v} - u\overline{l^+[v]} \right) dx = [u, v](b) - [u, v](0).
\]
At $x = b$, $[u, v](b) = u(b)\overline{v^{\{1\}}(b)} - u^{[1]}(b)\overline{v(b)} = 0$. At $x = 0$, both $u$ and $v$ satisfy the Dirichlet condition $u(0) = v(0) = 0$, so:
\[
[u, v](0) = 0 \cdot \overline{v^{\{1\}}(0)} - u^{[1]}(0) \cdot 0 = 0.
\]
Consequently, for all $u \in \Dom(\dot{S}_0)$ we have $(\dot{S}_0 u, v)_{L_2} = (u, S^+ v)_{L_2}$, which proves that $v \in \Dom((\dot{S}_0)^*)$ with $(\dot{S}_0)^* v = S^+ v$, establishing $S^+ \subset (\dot{S}_0)^*$.

\emph{Step 2: Local regularity of elements in $\Dom((\dot{S}_0)^*)$.} Let $v \in \Dom((\dot{S}_0)^*)$, meaning there exists $f \in L_2(\R_+)$ such that
\begin{equation}\label{eq:adj-def}
(\dot{S}_0 u, v)_{L_2(\R_+)} = (u, f)_{L_2(\R_+)} \qquad \forall u \in \Dom(\dot{S}_0).
\end{equation}
Consider an arbitrary compact subinterval $\Delta = [\alpha, \beta] \subset (0, \infty)$ with $0 < \alpha < \beta < \infty$. For any function $u \in \Dom(S_{0,\Delta})$, extending $u$ by zero outside $\Delta$ yields a function in $\Dom(\dot{S}_0)$. Identity \eqref{eq:adj-def} then gives:
\[
(S_{0,\Delta} u, v|_\Delta)_{L_2(\Delta)} = (u, f|_\Delta)_{L_2(\Delta)} \qquad \forall u \in \Dom(S_{0,\Delta}).
\]
By Proposition~\ref{prop:regular}, $(S_{0,\Delta})^* = S_\Delta^+$. Hence, $v|_\Delta \in \Dom(S_\Delta^+)$, which implies $v|_\Delta \in \AC(\Delta)$, $v^{\{1\}}|_\Delta \in \AC(\Delta)$, and $l^+[v]|_\Delta = f|_\Delta$. Since $\Delta \subset (0, \infty)$ is arbitrary, we obtain $v, v^{\{1\}} \in \AC_{\mathrm{loc}}(0, \infty)$ and $l^+[v] = f \in L_2(\R_+)$.

\emph{Step 3: Regularity at $x = 0$ and the boundary condition $v(0) = 0$.} Now consider the interval $\Delta_0 = [0, b]$ for some $b > 0$. The vector $(v, v^{\{1\}})^T$ satisfies almost everywhere on $[0, b]$ the first-order non-homogeneous system:
\begin{equation*}\label{eq:caratheodory}
\begin{pmatrix} v \\ v^{\{1\}} \end{pmatrix}' = \begin{pmatrix} \overline{Q} & 1 \\ -(\overline{Q}^2 - \overline{s}) & -\overline{Q} \end{pmatrix} \begin{pmatrix} v \\ v^{\{1\}} \end{pmatrix} + \begin{pmatrix} 0 \\ -f \end{pmatrix}.
\end{equation*}
Since the coefficient matrix has entries in $L_1[0, b]$ and $f \in L_2[0, b]$, the classical Carath\'eodory theorem \cite{Zttl1975} guarantees that $v$ and $v^{\{1\}}$ extend continuously to $x = 0$, so that $v, v^{\{1\}} \in \AC[0, b]$. Therefore, $v, v^{\{1\}} \in \AC_{\mathrm{loc}}(\R_+)$.

Applying the Lagrange identity to any $u \in \Dom(\dot{S}_0)$ with $\supp u \subset [0, b]$ and our function $v$:
\[
\int_0^b \left( l[u]\bar{v} - u\overline{l^+[v]} \right) dx = [u, v](b) - [u, v](0) = -[u, v](0).
\]
On the other hand, by \eqref{eq:adj-def}, the left-hand side equals $(\dot{S}_0 u, v) - (u, f) = 0$. Hence, for every $u \in \Dom(\dot{S}_0)$,
\begin{equation}\label{eq:boundary-zero}
[u, v](0) = u(0)\overline{v^{\{1\}}(0)} - u^{[1]}(0)\overline{v(0)} = 0.
\end{equation}
By \cite{Zttl1975}, for any prescribed complex numbers $c_0, c_1 \in \C$, there exists a function $u \in \Dom(l)$ with compact support in $[0, b]$ such that $u(0) = c_0$ and $u^{[1]}(0) = c_1$. To satisfy $u \in \Dom(\dot{S}_0)$, we set $c_0 = 0$, while $c_1 = u^{[1]}(0)$ can be chosen arbitrarily. Choosing a test function $u_0 \in \Dom(\dot{S}_0)$ with $u_0(0) = 0$ and $u_0^{[1]}(0) = 1$, substitution into \eqref{eq:boundary-zero} yields:
\[
[u_0, v](0) = 0 \cdot \overline{v^{\{1\}}(0)} - 1 \cdot \overline{v(0)} = -\overline{v(0)} = 0 \implies v(0) = 0.
\]
Thus, $v$ satisfies the Dirichlet boundary condition $v(0) = 0$, which proves that $v \in \Dom(S^+)$ and $(\dot{S}_0)^* v = l^+[v] = S^+ v$. Hence, $(\dot{S}_0)^* \subset S^+$, which together with Step~1 establishes $(\dot{S}_0)^* = S^+$. The equality $(\dot{S}_0^+)^* = S$ is proved identically by exchanging $l$ and $l^+$.

\smallskip
\noindent \emph{Proof of Statement $2^\circ$} (Closedness and Closures).
As adjoints of densely defined operators, $S = (\dot{S}_0^+)^*$ and $S^+ = (\dot{S}_0)^*$ are automatically closed operators. Since $S$ and $S^+$ are densely defined, their adjoints $\dot{S}_0$ and $\dot{S}_0^+$ are closable. Their closures are given by $S_0 = \overline{\dot{S}_0} = (\dot{S}_0)^{**} = (S^+)^*$, and $S_0^+ = \overline{\dot{S}_0^+} = (\dot{S}_0^+)^{**} = S^*$. Taking adjoints immediately gives $(S_0)^* = S^+$ and $(S_0^+)^* = S$.

\smallskip
\noindent \emph{Proof of Statement $3^\circ$} (Minimal Domains).
By definition, $\Dom(S_0) = \Dom((S^+)^*)$:
\[
\Dom(S_0) = \left\{ u \in \Dom(S) \;\middle|\; (Su, v)_{L_2} = (u, S^+ v)_{L_2} \quad \forall v \in \Dom(S^+) \right\}.
\]
For any $u \in \Dom(S)$ and $v \in \Dom(S^+)$, the Lagrange identity on $[0, b]$ gives $(Su, v)_{L_2[0, b]} - (u, S^+ v)_{L_2[0, b]} = [u, v](b) - [u, v](0) = [u, v](b)$, since $u(0) = v(0) = 0$. Letting $b \to \infty$, we obtain Green's formula on the half-line:
\[
(Su, v)_{L_2(\R_+)} - (u, S^+ v)_{L_2(\R_+)} = [u, v](\infty).
\]
Hence, $(Su, v) = (u, S^+ v)$ for all $v \in \Dom(S^+)$ if and only if $[u, v](\infty) = 0$, which proves \eqref{eq:DomS0}. The formula \eqref{eq:DomS0plus} for $\Dom(S_0^+)$ follows symmetrically.

\smallskip
\noindent \emph{Proof of Statement $4^\circ$} (Sobolev Embeddings).
Let $u \in \Dom(S)$. By definition of quasi-derivatives, $u^{[1]} = u' - Qu \in \AC_{\mathrm{loc}}(\R_+) \subset L_{2,\mathrm{loc}}(\R_+)$. Since $u \in \AC_{\mathrm{loc}}(\R_+)$, $u$ is continuous and hence locally bounded on any compact interval $[0, a]$. Since $Q \in L_{2,\mathrm{loc}}(\R_+)$, the product $Qu$ belongs to $L_{2,\mathrm{loc}}(\R_+)$. Therefore,
\[
u' = u^{[1]} + Qu \in L_{2,\mathrm{loc}}(\R_+).
\]
Since $u \in L_2(\R_+)$ and $u' \in L_{2,\mathrm{loc}}(\R_+)$, we have $u \in W_{2,\mathrm{loc}}^1(\R_+)$. The same reasoning applies to $v \in \Dom(S^+)$, where $v' = v^{\{1\}} + \overline{Q}v \in L_{2,\mathrm{loc}}(\R_+)$. For compactly supported functions in $\Dom(\dot{S}_0)$ and $\Dom(\dot{S}_0^+)$, the derivative $u'$ has compact support, yielding $u \in W_{2,\mathrm{comp}}^1(\R_+)$.

\smallskip
\noindent \emph{Proof of Statement $5^\circ$} ($C$-Action).
Let $u \in \Dom(S)$ and $w := Cu = \bar{u}$. Clearly $w \in L_2(\R_+)$ and $w(0) = \overline{u(0)} = 0$. For the first adjoint quasi-derivative:
\[
w^{\{1\}} = w' - \overline{Q}w = \overline{u}' - \overline{Q}\bar{u} = \overline{u' - Qu} = \overline{u^{[1]}}.
\]
Since complex conjugation preserves absolute continuity, $u^{[1]} \in \AC_{\mathrm{loc}}(\R_+)$ implies $w^{\{1\}} \in \AC_{\mathrm{loc}}(\R_+)$. Next, compute the second adjoint quasi-derivative:
\begin{align*}
l^+[w] &= - \left[ (w^{\{1\}})' + \overline{Q}w^{\{1\}} + (\overline{Q}^2 - \overline{s})w \right] \\
&= - \left[ \overline{(u^{[1]})'} + \overline{Q}\,\overline{u^{[1]}} + (\overline{Q}^2 - \overline{s})\bar{u} \right] = \overline{- \left[ (u^{[1]})' + Qu^{[1]} + (Q^2 - s)u \right]} = \overline{l[u]}.
\end{align*}
Since $l[u] \in L_2(\R_+)$, we have $l^+[w] \in L_2(\R_+)$, which implies $w = Cu \in \Dom(S^+)$. Since $C^2 = I$, the reverse inclusion $C \Dom(S^+) \subset \Dom(S)$ holds identically, proving $C \Dom(S) = \Dom(S^+)$. 

If $u$ has compact support, $\bar{u}$ has the same support, so $C \Dom(\dot{S}_0) = \Dom(\dot{S}_0^+)$. For the minimal operators, $C \Dom(S_0) = \Dom(S_0^+)$ follows from the domain characterizations \eqref{eq:DomS0}--\eqref{eq:DomS0plus} and the identity $[Cu, Cv](x) = -\overline{[u, v](x)}$.

\smallskip
\noindent \emph{Proof of Statement $6^\circ$} ($C$-Symmetry).
By Statement $5^\circ$, for every $u \in \Dom(S_0)$, we have $(C S_0 C)u = C S_0 \bar{u} = \overline{l[\bar{u}]} = l^+[u] = S_0^+ u$. Since $S_0^+ \subset S^+$, we obtain $C S_0 C = S_0^+ \subset (S_0)^*$. Similarly, $C S_0^+ C = S_0 \subset S= (S_0^+)^*$.

\smallskip
\noindent \emph{Proof of Statement $7^\circ$} (Quadratic Form).
Let $u \in \Dom(\dot{S}_0)$ with $\supp u \subset [0, b]$ and $u(0) = 0$. By definition,
\begin{align*}
\dot{t}[u] &:= (\dot{S}_0 u, u)_{L_2} = \int_0^b l[u]\bar{u}\, dx \\
&= \int_0^b \left[ -(u^{[1]})'\bar{u} - Qu^{[1]}\bar{u} - (Q^2 - s)|u|^2 \right] dx.
\end{align*}
Integrating the first term by parts and using $u(0) = u(b) = 0$:
\begin{align*}
-\int_0^b (u^{[1]})'\bar{u}\, dx &= -u^{[1]}(x)\overline{u(x)}\Big|_0^b + \int_0^b u^{[1]}\bar{u}'\, dx \\
&= \int_0^b (u' - Qu)\bar{u}'\, dx = \int_0^b |u'|^2 dx - \int_0^b Qu\bar{u}'\, dx.
\end{align*}
In the second term, substitute $u^{[1]} = u' - Qu$:
\[
-\int_0^b Qu^{[1]}\bar{u}\, dx = -\int_0^b Q(u' - Qu)\bar{u}\, dx = -\int_0^b Qu'\bar{u}\, dx + \int_0^b Q^2|u|^2 dx.
\]
The third term is $-\int_0^b (Q^2 - s)|u|^2 dx = -\int_0^b Q^2|u|^2 dx + \int_0^b s|u|^2 dx$.

Summing all three contributions, the terms $+Q^2|u|^2$ and $-Q^2|u|^2$ cancel each other out identically:
\[
\dot{t}[u] = \int_0^b |u'|^2 dx - \int_0^b Q(u'\bar{u} + u\bar{u}') dx + \int_0^b s|u|^2 dx.
\]
Extending the integration to $\R_+$ via the support of $u$, we arrive at formula \eqref{eq:form-repr}. 

The proof is complete.

%%%%%%%%%%%%%%%%%%%%%%%%%%%%%%%%%%%%%%%%%%%%%%%%%%%%%%%%%%%%%%%%%%%%%%%%%%%%%%%%%%%%%%%%%%%%%%%%%%%%%%%%%%%%%%%%%%%%%%%%%%%%%%%%%%%%%%%%%%%%%%%%%%%%%
%%%%%%%%%%%%%%%%%%%%%%%%%%%%%%%%%%%%%%%%%%%%%%%%%%%%%%%%%%%%%%%%%%%%%%%%%%%%%%%%%%%%%%%%%%%%%%%%%%%%%%%%%%%%%%%%%%%%%%%%%%%%%%%%%%%%%%%%%%%%%%%%%%%%%
\section{Main Theorems}\label{sec:MnRslts}

Decompose the coefficients into their real and imaginary parts:
\begin{align*}
Q(x) &= Q_0(x) + i Q_1(x), \qquad Q_0 := \mathrm{Re}\,Q, \quad Q_1 := \mathrm{Im}\,Q, \\ %\label{eq:Q-decomp}
s(x) &= s_0(x) + i s_1(x), \qquad s_0 := \mathrm{Re}\,s, \quad s_1 := \mathrm{Im}\,s. %\label{eq:s-decomp}
\end{align*}

\begin{theorem}[Accretivity and Sectoriality Conditions]\label{thm:3.1}
Under the general assumptions \eqref{eq:1.2} on the potential $q$, the following assertions hold:
\begin{enumerate}
    \item[\textbf{(I)}] Accretivity: The preminimal operators $\dot{S}_0$ and $\dot{S}_0^+$ are accretive in $L_2(\R_+)$ if
    \begin{equation}\label{eq:accretive-cond}
    s_0(x) \ge |Q_0(x)|^2 \qquad \text{for a.e. } x \in \R_+.
    \end{equation}
    \item[\textbf{(II)}] Sectoriality: If for some $k > 0$ there exists a non-positive constant $\gamma = \gamma(k) \le 0$ such that
    \begin{equation}\label{eq:sectorial-cond}
    s_0(x) - \gamma \ge \left(|Q_0(x)| + k|Q_1(x)|\right)^2 + k|s_1(x)| \qquad \text{for a.e. } x \in \R_+,
    \end{equation}
    then $\dot{S}_0$ and $\dot{S}_0^+$ are sectorial with vertex $\gamma$ and semi-angle $\theta = \arctan(1/k) \in (0, \pi/2)$.
    \item[\textbf{(III)}] Quasi-Accretivity for Measure Potentials: If the real part of the potential $q$ is a Radon measure satisfying the generalized Brinck condition \cite{Brnk1959, MkhMlb2018}:
    \begin{equation}\label{eq:brinck-real}
    \int_J s_0(x)\, dx + \int_J dQ_0(x) \ge -C \qquad (C > 0)
    \end{equation}
    for every interval $J \subset \R_+$ with length $|J| \le 1$, then $\dot{S}_0$ and $\dot{S}_0^+$ are quasi-accretive with shift $\alpha = 2C^2$.
    \item[\textbf{(IV)}] Sectoriality for Complex Measure Potentials: If, in addition, the imaginary part of $q$ is a Radon measure satisfying
    \begin{equation}\label{eq:brinck-imag}
    \left| \int_J s_1(x)\, dx + \int_J dQ_1(x) \right| \le C \qquad \forall J \subset \R_+, \; |J| \le 1,
    \end{equation}
    then the operators $\dot{S}_0$ and $\dot{S}_0^+$ are sectorial with vertex $\gamma = -4C^2(1+k)^2$ and semi-angle $\theta = \arctan(1/k)$ for any $k > 0$.
\end{enumerate}
\end{theorem}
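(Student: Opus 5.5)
The plan is to work with the explicit representation \eqref{eq:form-repr} of the preminimal form from Theorem~\ref{thm:2.1}, $7^\circ$. The starting observation is that $u'\bar u+u\bar u'=2\,\mathrm{Re}(u'\bar u)$ is real. Hence, for $u\in\Dom(\dot S_0)$, substituting $Q=Q_0+iQ_1$ and $s=s_0+is_1$ separates the form cleanly:
\begin{align*}
\mathrm{Re}\,\dot t[u]&=\int_{\R_+}|u'|^2dx-2\int_{\R_+}Q_0\,\mathrm{Re}(u'\bar u)\,dx+\int_{\R_+}s_0|u|^2dx,\\
\mathrm{Im}\,\dot t[u]&=-2\int_{\R_+}Q_1\,\mathrm{Re}(u'\bar u)\,dx+\int_{\R_+}s_1|u|^2dx.
\end{align*}
The statements for $\dot S_0^+$ will be obtained from those for $\dot S_0$ by conjugation. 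Since $(Cx,Cy)=\overline{(x,y)}$ and $C\dot S_0C=\dot S_0^+$ (Theorem~\ref{thm:2.1}, $5^\circ$, $6^\circ$), we have $\Theta(\dot S_0^+)=\overline{\Theta(\dot S_0)}$. Every sector $\gamma+\Lambda_\theta$ and every half-plane $\mathrm{Re}\,\lambda\ge-\alpha$ is symmetric under conjugation. This is the content of Corollary~\ref{cor:2.2}, extended to imaginary parts.

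For (I) I will use the pointwise inequality $2|Q_0||u'||u|\le|u'|^2+Q_0^2|u|^2$. This gives $\mathrm{Re}\,\dot t[u]\ge\int(s_0-Q_0^2)|u|^2dx\ge0$ by \eqref{eq:accretive-cond}.

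For (II) I estimate $\mathrm{Re}\,\dot t[u]-k|\mathrm{Im}\,\dot t[u]|$ from below by
\[
\int|u'|^2-2\int(|Q_0|+k|Q_1|)|u'||u|+\int(s_0-k|s_1|)|u|^2 .
\]
Applying the same elementary inequality with weight $|Q_0|+k|Q_1|$ bounds this below by $\int\bigl(s_0-k|s_1|-(|Q_0|+k|Q_1|)^2\bigr)|u|^2dx$. By \eqref{eq:sectorial-cond} this is $\ge\gamma\|u\|^2$, which is exactly the sectoriality inequality with $k=\cot\theta$.

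For (III) and (IV) the pointwise bounds are unavailable, and the approach becomes a localization to unit intervals $J_n=[n,n+1]$. Write $\mu_0:=s_0\,dx+dQ_0$, so that the real part of the form is $\int|u'|^2+\int|u|^2d\mu_0$ (after integrating the $Q_0$-term by parts, legitimate since $Q_0\in BV_{\rm loc}$). Similarly, $\mathrm{Im}\,\dot t[u]=\int|u|^2d\mu_1$ with $\mu_1:=s_1\,dx+dQ_1$.

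The key lemma I will aim for is an $\varepsilon$-inequality of the form
\[
\int_{J_n}|u|^2d\mu_0\ \ge\ -\varepsilon\int_{J_n}|u'|^2dx-c(\varepsilon,C)\int_{J_n}|u|^2dx .
\]
Summing over $n$ with $\varepsilon=1$ (or $1/2$) gives quasi-accretivity. The constant $\alpha=2C^2$ should come out of optimizing the elementary inequality $2C|u||u'|\le\varepsilon|u'|^2+C^2\varepsilon^{-1}|u|^2$ together with a Sobolev-type bound $\sup_J|u|^2\le\int_J(|u|^2+2|u||u'|)$.

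To prove the lemma, I plan to split $\mu_0|_{J_n}$ into a nonnegative measure, which can be discarded, and a remainder $\nu_n$ whose primitive $P_n(x)=\nu_n([n,x))$ satisfies $|P_n|\le C$ on $J_n$. Integrating by parts, $\int_{J_n}|u|^2d\nu_n=[P_n|u|^2]_{\partial J_n}-2\int_{J_n}P_n\,\mathrm{Re}(u'\bar u)$, and both terms are controlled by $C$ as above. Constructing such a splitting from the one-sided Brinck condition \eqref{eq:brinck-real} is the step I expect to be the main obstacle. I would first try a Riesz-rising-sun type argument: cut off the positive excess of the primitive $x\mapsto\mu_0([n,x))$ above its running minimum. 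The Brinck bound on every subinterval of length at most $1$ controls the downward oscillation by $C$. An alternative is a layer-cake decomposition of $|u|^2$ over the level sets $\{|u|^2>t\}$, applying \eqref{eq:brinck-real} on each component.

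For (IV), condition \eqref{eq:brinck-imag} is two-sided, so the primitive of $\mu_1|_{J_n}$ is directly bounded by $C$, and the same integration by parts gives $|\mathrm{Im}\,\dot t[u]|\le\varepsilon\int|u'|^2+c\|u\|^2$. Combining this with the bound from (III) and choosing $\varepsilon$ proportional to $1/(1+k)$ so that $\int|u'|^2$ is absorbed should yield the vertex $-4C^2(1+k)^2$.
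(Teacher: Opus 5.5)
Your treatment of (I) and (II) is the paper's argument: the elementary inequality with weight $|Q_0|$, resp.\ $|Q_0|+k|Q_1|$, is just its completion of the square. Deducing the claims for $\dot S_0^+$ from $\Theta(\dot S_0^+)=\overline{\Theta(\dot S_0)}$ is correct. It is in fact cleaner than the paper's appeal to Corollary~\ref{cor:2.2}, which only records real parts.

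For (III) and (IV) your route differs. The paper proves no $\varepsilon$-inequality. It imports $\int|u|^2\,d\widetilde Q_0\ge -C\epsilon\|u'\|^2-2C\epsilon^{-1}\|u\|^2$ and its two-sided analogue from \cite[Theorem~2.4, Corollary~2.3.2]{MkhMlb2018}, and then only chooses $\epsilon$. Deriving these bounds directly from the Brinck condition makes the argument self-contained. The step you flag as the obstacle does go through as you suggest. On $[n,n+1)$ put $P(x)=\mu_0([n,x))$ and $M(x)=\inf_{y\in[x,n+1]}P(y)$. Then $M$ is nondecreasing and $0\le P-M\le C$, because $P(y)-P(x)=\mu_0([x,y))\ge -C$.

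What the localized version does not give is the stated constants. The endpoint terms $-(P-M)(n)\,|u(n)|^2\ge -C|u(n)|^2$ must be absorbed through your trace bound. The result is $\int|u|^2\,d\mu_0\ge -C\|u\|^2-4C\int|u||u'|\,dx$, i.e.\ only $\alpha=C+4C^2$; optimizing $\varepsilon$ in the final absorption of $4C\int|u||u'|$ cannot do better. Correspondingly you get only the vertex $-(1+k)C-4(1+k)^2C^2$ in (IV).

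The remedy is to run the rising-sun step globally, so that no interior endpoints appear. Since $\mu_0([x,y))\ge -C\lceil y-x\rceil$, the function $\tilde P(x):=\mu_0([0,x))+Cx$ satisfies $\tilde P(y)-\tilde P(x)\ge -C$ for all $y\ge x$. Hence $\tilde P=M+R$ with $M(x):=\inf_{y\ge x}\tilde P(y)$ nondecreasing and $0\le R\le C$. Integrating by parts (the boundary terms vanish since $u(0)=0$ and $u$ has compact support) yields
\[
\int_{\R_+}|u|^2\,d\mu_0\ \ge\ -C\|u\|^2-2C\int_{\R_+}|u||u'|\,dx,
\qquad\text{so}\qquad
\mathrm{Re}\,\dot t[u]\ \ge\ -(C+C^2)\|u\|^2 .
\]
Applying the same bound to $\pm\mu_1$ gives $|\mathrm{Im}\,\dot t[u]|\le C\|u\|^2+2C\int|u||u'|\,dx$. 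Then $\mathrm{Re}\,\dot t[u]-k|\mathrm{Im}\,\dot t[u]|\ge -\bigl((1+k)C+(1+k)^2C^2\bigr)\|u\|^2$. These imply $\alpha=2C^2$ and $\gamma=-4C^2(1+k)^2$ once $C\ge 1$.

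Some such normalization is unavoidable, and the paper also uses it tacitly by taking $\epsilon=1/C\le 1$. The potential $s\equiv -C$, $Q\equiv 0$ satisfies \eqref{eq:brinck-real} and \eqref{eq:brinck-imag}, but forces $\alpha\ge C$, which exceeds $2C^2$ when $C<1/2$.
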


The next theorem is the central result of this paper, establishing that for Schr\"odinger operators on the half-line with Dirichlet boundary condition at $x = 0$, accretivity is equivalent to $m$-accretivity. As demonstrated in \cite{MkhMlb2026}, this remarkable property fails for operators with first-order convection/drift terms on the whole line~$\R$.

\begin{theorem}[Global Uniqueness and $m$-Accretivity]\label{thm:3.2}
If the preminimal operator $\dot{S}_0$ is accretive (respectively, sectorial), then the minimal operators $S_0$ and $S_0^+$ are $m$-accretive (respectively, $m$-sectorial). 

Moreover, the minimal and maximal operators coincide:
\begin{equation*}\label{eq:coincidence}
S_0 = S \qquad \text{and} \qquad S_0^+ = S^+,
\end{equation*}
which implies that the boundary form vanishes identically at infinity:
\begin{equation}\label{eq:vanish-boundary}
[u, v](\infty) = 0 \quad \forall u \in \Dom(S), \; \forall v \in \Dom(S^+).
\end{equation}
Consequently, $-S$ generates in $L_2(\R_+)$ a strongly continuous contraction semi\-group$\{e^{-tS}\}_{t \ge 0}$ (holomorphic if $S$ is sectorial).
\end{theorem}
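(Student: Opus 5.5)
Assume $\dot S_0$ is accretive. By Corollary~\ref{cor:2.2}, $\dot S_0^+$ is then accretive too, and accretivity passes to the closures $S_0$ and $S_0^+$. The standard criterion for a closed, densely defined accretive operator is that it is $m$-accretive iff its adjoint is accretive (equivalently, $\Ker(T^*+I)=\{0\}$). By Theorem~\ref{thm:2.1}, $(S_0)^*=S^+$ and $(S_0^+)^*=S$. So the whole theorem reduces to one statement: the maximal operators $S$ and $S^+$ are accretive, or at least $\Ker(S^++I)=\{0\}$. Once this holds, $S_0$ is $m$-accretive. Since $S_0\subset S=(S_0^+)^*$, and $S$ is an accretive extension of the maximally accretive $S_0$, we get $S_0=S$; the same argument gives $S_0^+=S^+$. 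Formula \eqref{eq:vanish-boundary} then follows from \eqref{eq:DomS0}, and the semigroup claims follow from Lumer--Phillips. For the sectorial case, a closed sectorial operator whose closure is quasi-$m$-accretive is $m$-sectorial. Sectoriality of $S=S_0$ is inherited from $\dot S_0$ by closure, since the numerical range of the closure lies in the closure of the numerical range.

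The key step is showing $\mathrm{Re}\,(Su,u)\ge 0$ for every $u\in\Dom(S)$, by a cut-off argument. Fix $\chi\in C_0^\infty[0,\infty)$ with $\chi=1$ on $[0,1]$, $0\le\chi\le1$, $\supp\chi\subset[0,2]$, and put $\chi_n(x)=\chi(x/n)$. For $u\in\Dom(S)$, the product $\chi_n u$ is compactly supported with $(\chi_nu)(0)=0$. One checks that $(\chi_nu)^{[1]}=\chi_nu^{[1]}+\chi_n'u\in\AC_{\mathrm{loc}}$ and
\[
l[\chi_nu]=\chi_n\,l[u]-2\chi_n'u'-\chi_n''u+(\text{terms with } Q)\,.
\]
The key computation is that the $Q$-terms cancel exactly, giving $l[\chi_nu]=\chi_n l[u]-2\chi_n' u^{[1]}-\chi_n''u-2Q\chi_n'u+2Q\chi_n' u$. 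More carefully, one writes $l[\chi_n u]=\chi_n l[u]-\chi_n' u^{[1]}-(\chi_n'u)'-Q\chi_n'u$. I will verify that the singular terms reorganize into $-2\chi_n'u^{[1]}-\chi_n''u$, using $u'=u^{[1]}+Qu$ inside $(\chi_n'u)'$. This is the ``complete cancellation of singular potential terms'' mentioned in the introduction. Consequently $\chi_nu\in\Dom(\dot S_0)$, and
\[
0\le \mathrm{Re}\,(\dot S_0(\chi_nu),\chi_nu)=\mathrm{Re}\int \chi_n^2\, l[u]\bar u\,dx-\mathrm{Re}\int\bigl(2\chi_n\chi_n'u^{[1]}\bar u+\chi_n\chi_n''|u|^2\bigr)dx .
\]

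The main obstacle is controlling the term $\int\chi_n\chi_n' u^{[1]}\bar u$. Here $u^{[1]}$ is not known to lie in $L_2$ globally, and it does not even appear in a clean form in \eqref{eq:form-repr}, because of the $Q$-terms. I would handle it in one of two ways. The first is to bound it directly: $|\chi_n'|\le c/n$ and $|\chi_n''|\le c/n^2$, so the $\chi_n''$ term tends to $0$. For the first-derivative term, apply the accretivity inequality in the form \eqref{eq:form-repr} to $\chi_nu$, with $\mathrm{Re}$ of the $Q$- and $s$-terms absorbed via the local identity $\int\chi_n^2 l[u]\bar u=\int\chi_n^2|u^{[1]}|^2+\dots$. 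This yields an a priori estimate of the type $\int\chi_n^2|u^{[1]}|^2\lesssim \|Su\|\|u\|+n^{-1}(\int\chi_n^2|u^{[1]}|^2)^{1/2}\|u\|$, hence boundedness and then vanishing of the cross term as $n\to\infty$. The second, if this estimate fails, is to apply it only to $u\in\Ker(S^++I)$. There, integration by parts together with Corollary~\ref{cor:2.2} gives $\|\chi_nu\|^2\le \mathrm{Re}\int(\dots)$ with error $O(1/n)\cdot(\text{local energy})$, and a Gronwall/Kato-type iteration over the dyadic annuli $[n,2n]$ forces $u=0$. In either version, letting $n\to\infty$ gives $\mathrm{Re}(Su,u)\ge0$ (or $\Ker(S^++I)=\{0\}$), which completes the plan above. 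In the sectorial case the same limit applied to $\mathrm{Re}-k|\mathrm{Im}|$ preserves the vertex $\gamma$.
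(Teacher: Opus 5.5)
Your reduction is sound. Since $(S_0)^*=S^+$ and $(S_0^+)^*=S$, accretivity of the maximal operators gives $m$-accretivity of $S_0$ and $S_0^+$, and then $S_0=S$, $S_0^+=S^+$ follow by maximality. The gap is in the key step. The commutator formula you say you will verify is false. From $(\chi_nu)^{[1]}=\chi_n'u+\chi_nu^{[1]}$ one gets
\[
l[\chi_nu]=\chi_n\,l[u]-\chi_n''u-2\chi_n'u'=\chi_n\,l[u]-\chi_n''u-2\chi_n'u^{[1]}-2Q\chi_n'u .
\]
So the $Q$-terms cancel only when the result is written with the classical derivative $u'$, not with $u^{[1]}$; your version drops $-2Q\chi_n'u$. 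Consequently your displayed formula for $\mathrm{Re}\,(\dot S_0(\chi_nu),\chi_nu)$ is wrong. It omits $-2\int Q_0\chi_n\chi_n'|u|^2$, which cannot be controlled for $Q\in L_{2,\mathrm{loc}}$. The ``main obstacle'' you then identify, controlling $\int\chi_n\chi_n'u^{[1]}\bar u$, is an artifact of this error. Neither proposed remedy is an actual argument. Accretivity gives no a priori bound on $\int\chi_n^2|u^{[1]}|^2$, since the omitted ``$\dots$'' contain $Q$- and $s$-terms of no definite sign or size. The dyadic Gronwall/Kato iteration is never specified.

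The missing idea is to keep $u'$, which lies in $L_{2,\mathrm{loc}}$ by Theorem~\ref{thm:2.1}($4^\circ$), and use that $2\,\mathrm{Re}(u'\bar u)=(|u|^2)'$ is an exact derivative. Integrating $-\int\chi_n\chi_n''|u|^2$ by parts gives an exact identity with no term involving $u^{[1]}$, $Q$ or $s$; the boundary terms vanish because $u(0)=0$ and $\chi_n$ has compact support. The identity is
\[
\mathrm{Re}\,(\dot S_0(\chi_nu),\chi_nu)=\mathrm{Re}\int_{\R_+}\chi_n^2\,l[u]\,\bar u\,dx+\int_{\R_+}(\chi_n')^2|u|^2\,dx .
\]
Accretivity of $\dot S_0$ then yields $\mathrm{Re}\int\chi_n^2\,l[u]\bar u\ge -\|\chi'\|_\infty^2n^{-2}\|u\|^2$. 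Dominated convergence gives $\mathrm{Re}\,(Su,u)\ge 0$ for all $u\in\Dom(S)$, and likewise for $S^+$.

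This is the same cancellation the paper uses, but the paper applies it only to $v\in\Ker(S^+)$ after shifting to strict accretivity. There the identity reads $\mathrm{Re}\,(\dot S_0^+v_n,v_n)=\int(\varphi_n')^2|v|^2$ and forces $\int_0^n|v|^2\le K^2\int_n^{n+1}|v|^2\to0$; the paper then gets $S_0^+=S^+$ from two nested $m$-accretive operators. Once corrected, your route gives the slightly stronger statement that the maximal operators are themselves accretive, and the rest of your plan goes through.

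Your final remark about passing to the limit in $\mathrm{Re}-k|\mathrm{Im}|$ is unnecessary, and the imaginary cross term $2\int\chi_n\chi_n'\,\mathrm{Im}(u'\bar u)$ is not obviously small. Your closure argument, after shifting by the vertex, already handles the sectorial case.
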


%%%%%%%%%%%%%%%%%%%%%%%%%%%%%%%%%%%%%%%%%%%%%%%%%%%%%%%%%%%%%%%%%%%%%%%%%%%%%%%%%%%%%%%%%%%%%%%%%%%%%%%%%%%%%%%%%%%%%%%%%%%%%%%%%%%%%%%%%%%%%%%%
%%%%%%%%%%%%%%%%%%%%%%%%%%%%%%%%%%%%%%%%%%%%%%%%%%%%%%%%%%%%%%%%%%%%%%%%%%%%%%%%%%%%%%%%%%%%%%%%%%%%%%%%%%%%%%%%%%%%%%%%%%%%%%%%%%%%%%%%%%%%%%%%
%Нагадаємо (див., наприклад, \cite{ArlSchm2026, Glzmn1966}), що щільно визначений замкнений оператор $\mathrm{S}$ в %гільбертовому просторі $\mathrm{H}$ називають $\mathrm{C}$-самоспряженим стосовно антилінійної інволюції $\mathrm{C}$, якщо %$\mathrm{C}\mathrm{T}\mathrm{C}=\mathrm{T}^{\ast}$. Як встановлено в теоремі~\ref{th_Prprts} (пункт $6^{0}$) мінімальні %оператори $\mathrm{S}_{0}$ і $\mathrm{S}_{0}^{+}$ задовольняють включення
%\begin{equation*}
%   \mathrm{C}\mathrm{S}_{0}\mathrm{C}\subset \mathrm{S}_{0}^{\ast}, \qquad 
%   \mathrm{C}\mathrm{S}_{0}^{+}\mathrm{C}\subset (\mathrm{S}_{0}^{+})^{\ast},
%\end{equation*}
%тобто є $\mathrm{C}$-симетричними, щодо інволюції комплексного спряження в функціональному просторі %$L^{2}(\mathbb{R}_{+})$. Легко бачити, що ці включення є рівностями, тоді і лише тоді, коли $\mathrm{S}_{0}=\mathrm{S}$, %$\mathrm{S}_{0}^{+}=\mathrm{S}^{+}$. Тому з теорем~\ref{th_mAccr} і \ref{th_Accr} випливає

Let us remind (see, for an example, \cite{ArlSchm2026, Glzmn1966}) that a densely defined closed operator $T$ in a Hilbert space $H$ is called $C$-self-adjoint with respect to the anti-linear involution $C$ if $C T C=T^*$. As established in Theorem~\ref{thm:2.1}~($6^0$) the minimal operators $S_{0}$ and $S_{0}^{+}$ are satisfied the inclusions
\begin{equation*}
   C S_{0}C \subset S_{0}^{\ast}, \qquad 
   C S_{0}^{+}C \subset (S_{0}^{+})^{*},
\end{equation*}
i. e., they are $C$-symmetric with respect to the complex adjoint involution $C$ in the functional space $L_{2}(\mathbb{R}_{+})$. It is easy to see that the inclusions are equalities indeed if and only if $S_0=S$ and $S_0^+=S^+$. Therefore, from Theorems~\ref{thm:3.1} and \ref{thm:3.2} it follows 

\begin{theorem}[$C$-Self-Adjointness]\label{thm:3.3}
If the preminimal operator $\dot{S}_0$ is quasi-accretive, then the minimal operators $S_0$ and $S_0^+$ are $C$-self-adjoint with respect to complex conjugation:
\begin{equation*}\label{eq:C-selfadj}
C S_0 C = S_0^* , \qquad C S_0^+ C = (S_0^+)^*.
\end{equation*}
In particular, this holds if there exists a constant $\alpha > 0$ such that
\begin{equation}\label{eq:C-selfadj-cond}
s_0(x) + \alpha \ge |Q_0(x)|^2 \qquad \text{for a.e. } x \in \R_+.
\end{equation}
Moreover, the residual spectrum of $S_0$ is empty:
\[
\sigma_r(S_0) = \emptyset.
\]
\end{theorem}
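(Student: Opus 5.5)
The argument reduces Theorem~\ref{thm:3.3} to Theorem~\ref{thm:3.2}, applied after a real shift of the potential. Suppose $\dot{S}_0 + \alpha I$ is accretive for some $\alpha \in \R$. This operator is the preminimal operator generated by the potential $q+\alpha = (s+\alpha) + Q'$, which again satisfies \eqref{eq:1.2}. Its minimal and maximal operators are exactly $S_0+\alpha I$ and $S+\alpha I$, and its adjoint counterparts are $S_0^+ + \alpha I$ and $S^+ + \alpha I$. The shift is real, so $\bar\alpha = \alpha$ and the adjoint expression $l^+$ is shifted by the same $\alpha$.

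Theorem~\ref{thm:3.2} then gives $S_0 + \alpha I = S + \alpha I$ and $S_0^+ + \alpha I = S^+ + \alpha I$, hence $S_0 = S$ and $S_0^+ = S^+$. Combining this with Theorem~\ref{thm:2.1}, statements $2^\circ$ and $6^\circ$, we get
\[
C S_0 C = S_0^+ = S^+ = (S_0)^*, \qquad C S_0^+ C = S_0 = S = (S_0^+)^*.
\]
The identity $C S_0 C = S_0^+$ holds as an equality of operators, not only an inclusion. Indeed, $C\Dom(S_0) = \Dom(S_0^+)$ by $5^\circ$, and $\overline{l[\bar u]} = l^+[u]$ by the computation in the proof of $5^\circ$. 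This proves the $C$-self-adjointness.

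For the sufficient condition \eqref{eq:C-selfadj-cond}, we apply Theorem~\ref{thm:3.1}(I) to the shifted potential. Its real part of $s$ is $s_0+\alpha$, while $Q_0$ is unchanged. Condition \eqref{eq:accretive-cond} for the shifted potential is therefore exactly \eqref{eq:C-selfadj-cond}. So $\dot{S}_0+\alpha I$ is accretive, i.e.\ $\dot{S}_0$ is quasi-accretive, and the first part applies.

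For the emptiness of the residual spectrum, take $\lambda \in \sigma_r(S_0)$. Then $S_0-\lambda$ is injective and its range is not dense, so there is $w \ne 0$ with $w \in \Ker(S_0^* - \bar\lambda)$. Using $S_0^* = C S_0 C$, we get $C S_0 C w = \bar\lambda w$. Applying $C$ and using its anti-linearity gives $S_0(Cw) = \lambda\, Cw$. Since $Cw \ne 0$, this contradicts the injectivity of $S_0-\lambda$. Hence $\sigma_r(S_0)=\emptyset$, and the same argument gives the result for $S_0^+$.

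The step that needs care is the reduction to Theorem~\ref{thm:3.2}. One must check that the real shift $q \mapsto q+\alpha$ commutes with all the constructions: the quasi-derivatives (only $u^{[2]}$ changes, by $-\alpha u$), the domains, the closures, and the adjoint expression. One must also check that accretivity of $\dot S_0+\alpha I$ is precisely the hypothesis of Theorem~\ref{thm:3.2} for the shifted operator. All of these are immediate because $\alpha$ is real.
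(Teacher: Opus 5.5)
Your proposal is correct and follows the paper's route: the $C$-symmetry $CS_0C = S_0^+ \subset S^+ = S_0^*$ from Theorem~\ref{thm:2.1} becomes an equality once Theorem~\ref{thm:3.2} gives $S_0 = S$ and $S_0^+ = S^+$, and the sufficient condition comes from Theorem~\ref{thm:3.1}(I). You also spell out two points the paper leaves implicit: the real shift $q \mapsto q+\alpha$ that lets Theorem~\ref{thm:3.2}, stated for accretive $\dot S_0$, cover the quasi-accretive case, and the standard conjugation argument showing $\sigma_r(S_0)=\emptyset$.
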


%%%%%%%%%%%%%%%%%%%%%%%%%%%%%%%%%%%%%%%%%%%%%%%%%%%%%%%%%%%%%%%%%%%%%%%%%%%%%%%%%%%%%%%%%%%%%%%%%%%%%%%%%%%%%%%%%%%%%%%%%%%%%%%%%%%%%%%%%%%%%%%%%%%%%
%%%%%%%%%%%%%%%%%%%%%%%%%%%%%%%%%%%%%%%%%%%%%%%%%%%%%%%%%%%%%%%%%%%%%%%%%%%%%%%%%%%%%%%%%%%%%%%%%%%%%%%%%%%%%%%%%%%%%%%%%%%%%%%%%%%%%%%%%%%%%%%%%%%%%
\section{Proof of Theorem~\ref{thm:3.1}}\label{sec:thm3.1}

\noindent \emph{Part (I):} Accretivity.
Let $u \in \Dom(\dot{S}_0)$. Decompose the quadratic form \eqref{eq:form-repr} into its real and imaginary parts:
\[
\dot{t}[u] = \dot{h}[u] + i \dot{f}[u], \qquad \dot{h}[u] := \mathrm{Re}\,\dot{t}[u], \quad \dot{f}[u] := \mathrm{Im}\,\dot{t}[u].
\]
Notice that $u'\bar{u} + u\bar{u}' = 2\,\mathrm{Re}(u'\bar{u}) = (|u|^2)'$ is purely real. Therefore,
\begin{align*}
Q(u'\bar{u} + u\bar{u}') &= (Q_0 + iQ_1)(|u|^2)' = Q_0(|u|^2)' + i Q_1(|u|^2)', \\
s|u|^2 &= (s_0 + is_1)|u|^2 = s_0|u|^2 + i s_1|u|^2.
\end{align*}
This yields the exact expressions:
\begin{align}
\dot{h}[u] &= \int_{\R_+} |u'|^2 dx - 2\int_{\R_+} Q_0\,\mathrm{Re}(u'\bar{u})\, dx + \int_{\R_+} s_0|u|^2 dx, \label{eq:h-expr}\\
\dot{f}[u] &= -2\int_{\R_+} Q_1\,\mathrm{Re}(u'\bar{u})\, dx + \int_{\R_+} s_1|u|^2 dx. \label{eq:f-expr}
\end{align}
Applying the Cauchy--Schwarz inequality $|\mathrm{Re}(u'\bar{u})| \le |u'||u|$ to \eqref{eq:h-expr}:
\begin{align*}
\dot{h}[u] &\ge \int_{\R_+} \left( |u'|^2 - 2|Q_0||u||u'| + s_0|u|^2 \right) dx \\
&= \int_{\R_+} \left( |u'| - |Q_0||u| \right)^2 dx + \int_{\R_+} \left( s_0(x) - |Q_0(x)|^2 \right) |u|^2 dx.
\end{align*}
Under condition \eqref{eq:accretive-cond}, $s_0(x) \ge |Q_0(x)|^2$ a.e. on $\R_+$, so both integrand terms are non-negative. Hence, $\dot{h}[u] \ge 0$, which proves the accretivity of $\dot{S}_0$. By Corollary~\ref{cor:2.2}, $\dot{S}_0^+$ is also accretive.

\smallskip
\noindent \emph{Part (II):} Sectoriality.
From \eqref{eq:f-expr}, we estimate the imaginary part:
\[
|\dot{f}[u]| \le 2\int_{\R_+} |Q_1||u||u'|\, dx + \int_{\R_+} |s_1||u|^2 dx.
\]
Then, for any $k > 0$ and $\gamma \le 0$:
\begin{align*}
\dot{h}[u] &- \gamma \|u\|_{L_2}^2 - k |\dot{f}[u]| \\
&\ge \int_{\R_+} \left[ |u'|^2 - 2\left(|Q_0| + k|Q_1|\right)|u||u'| + \left( s_0 - k|s_1| - \gamma \right)|u|^2 \right] dx \\
&= \int_{\R_+} \left( |u'| - (|Q_0| + k|Q_1|)|u| \right)^2 dx + \int_{\R_+} \left[ s_0 - \gamma - \left(|Q_0| + k|Q_1|\right)^2 - k|s_1| \right] |u|^2 dx.
\end{align*}
By assumption \eqref{eq:sectorial-cond}, the term in the second brackets is non-negative almost everywhere. Thus,
\[
\dot{h}[u] - \gamma \|u\|_{L_2}^2 - k |\dot{f}[u]| \ge 0 \qquad \forall u \in \Dom(\dot{S}_0),
\]
which proves sectoriality with vertex $\gamma$ and semi-angle $\theta = \arctan(1/k)$. By Corollary~\ref{cor:2.2}, $\dot{S}_0^+$ satisfies the same sectorial bound.

\smallskip
\noindent \emph{Part (III):} Quasi-Accretivity for Measure Potentials.
Introduce the primitives:
\begin{equation*}\label{eq:primitives}
\sigma_0(x) := \int_0^x s_0(t)\, dt, \quad \sigma_1(x) := \int_0^x s_1(t)\, dt, \qquad \widetilde{Q}_0 := Q_0 + \sigma_0, \quad \widetilde{Q}_1 := Q_1 + \sigma_1.
\end{equation*}
If $\mathrm{Re}\,q$ is a Radon measure, then $\widetilde{Q}_0 \in BV_{\mathrm{loc}}(\R_+)$, $\mathrm{Re}\,q = \widetilde{Q}_0'$, and the Brinck condition \eqref{eq:brinck-real} becomes:
\begin{equation}\label{eq:brinck-compact}
\int_J d\widetilde{Q}_0(x) \ge -C \qquad (C > 0) \quad \forall J \subset \R_+, \; |J| \le 1.
\end{equation}
For $u \in \Dom(\dot{S}_0)$, integration by parts gives:
\[
\dot{h}[u] = \int_{\R_+} |u'|^2 dx + \int_{\R_+} |u|^2 d\widetilde{Q}_0.
\]
By \cite[Theorem 2.4]{MkhMlb2018}, under condition \eqref{eq:brinck-compact}, the form bound holds:
\[
\int_{\R_+} |u|^2 d\widetilde{Q}_0 \ge -C\epsilon \int_{\R_+} |u'|^2 dx - 2C\epsilon^{-1} \int_{\R_+} |u|^2 dx \qquad \forall \epsilon \in (0, 1].
\]
Setting $\epsilon = 1/C \le 1$ (for $C \ge 1$), we obtain $1 - C\epsilon = 0$, so:
\[
\dot{h}[u] \ge -2C^2 \|u\|_{L_2}^2,
\]
which proves that $\dot{S}_0$ is quasi-accretive with $\alpha = 2C^2$.

\smallskip
\noindent \emph{Part (IV):} Sectoriality for Measure Potentials.
If $\mathrm{Im}\,q$ is also a Radon measure, $\widetilde{Q}_1 \in BV_{\mathrm{loc}}(\R_+)$, and the condition \eqref{eq:brinck-imag} reads:
\[
\left| \int_J d\widetilde{Q}_1(x) \right| \le C \qquad \forall J \subset \R_+, \; |J| \le 1.
\]
Then $\dot{f}[u] = \int_{\R_+} |u|^2 d\widetilde{Q}_1$. By \cite[Corollary 2.3.2]{MkhMlb2018}, for any $\epsilon \in (0, 1]$ and $C \ge 2$:
\[
|\dot{f}[u]| \le C\epsilon \int_{\R_+} |u'|^2 dx + 2C\epsilon^{-1} \int_{\R_+} |u|^2 dx.
\]
For any $k > 0$, choose $\epsilon := (2C(1 + k))^{-1} \in (0, 1]$. Then:
\begin{align*}
\dot{h}[u] - k|\dot{f}[u]| &\ge \int_{\R_+} |u'|^2 dx - C(1 + k)\epsilon \int_{\R_+} |u'|^2 dx - 2C(1 + k)\epsilon^{-1} \int_{\R_+} |u|^2 dx \\
&= \left( 1 - \frac{1}{2} \right) \int_{\R_+} |u'|^2 dx - 4C^2(1 + k)^2 \int_{\R_+} |u|^2 dx \\
&\ge -4C^2(1 + k)^2 \|u\|_{L_2}^2.
\end{align*}
Hence, $\dot{S}_0$ and $\dot{S}_0^+$ are sectorial with vertex $\gamma = -4C^2(1+k)^2$ and semi-angle $\theta = \arctan(1/k)$.

The proof is complete.

%%%%%%%%%%%%%%%%%%%%%%%%%%%%%%%%%%%%%%%%%%%%%%%%%%%%%%%%%%%%%%%%%%%%%%%%%%%%%%%%%%%%%%%%%%%%%%%%%%%%%%%%%%%%%%%%%%%%%%%%%%%%%%%%%%%%%%%%%%%%%%%%%%%%%
%%%%%%%%%%%%%%%%%%%%%%%%%%%%%%%%%%%%%%%%%%%%%%%%%%%%%%%%%%%%%%%%%%%%%%%%%%%%%%%%%%%%%%%%%%%%%%%%%%%%%%%%%%%%%%%%%%%%%%%%%%%%%%%%%%%%%%%%%%%%%%%%%%%%%
\section{Proof of Theorem~\ref{thm:3.2}}\label{sec:thm3.2}

Let the preminimal operators $\dot{S}_0$ and $\dot{S}_0^+$ be accretive. By applying a shift $S \mapsto S + \alpha I$ with $\alpha = 1$, we may assume without loss of generality that they are strictly accretive:
\begin{align}
\mathrm{Re}\,(\dot{S}_0 u, u)_{L_2(\R_+)} &\ge \|u\|_{L_2(\R_+)}^2 \qquad \forall u \in \Dom(\dot{S}_0), \label{eq:5.1}\\
\mathrm{Re}\,(\dot{S}_0^+ v, v)_{L_2(\R_+)} &\ge \|v\|_{L_2(\R_+)}^2 \qquad \forall v \in \Dom(\dot{S}_0^+). \label{eq:5.2}
\end{align}
Such a shift leaves the operator domains invariant and preserves all adjoint relations. 

By continuity, inequality \eqref{eq:5.1} extends to the entire domain of the closure $S_0 = \overline{\dot{S}_0}$:
\begin{equation*}\label{eq:S0-strict}
\mathrm{Re}\,(S_0 u, u)_{L_2(\R_+)} \ge \|u\|_{L_2(\R_+)}^2 \qquad \forall u \in \Dom(S_0).
\end{equation*}
From this estimate, it follows that the range $\Ran(S_0)$ is closed in $L_2(\R_+)$, and the defect index of $S_0$ is given by:
\begin{equation*}\label{eq:defect}
\mathrm{def}(S_0) := \dim \Ran(S_0)^\perp = \dim \Ker(S_0^*) = \dim \Ker(S^+).
\end{equation*}
Consequently, $S_0$ is $m$-accretive if and only if $\mathrm{def}(S_0) = 0$, that is, the homogeneous adjoint equation
\begin{equation}\label{eq:homog-adj}
S^+ v = 0, \qquad v \in \Dom(S^+)
\end{equation}
admits only the trivial solution $v = 0$.

To prove that $\Ker(S^+) = \{0\}$, we introduce a sequence of smooth cut-off functions. For each $n \in \N$, let $\Delta_n := [n, n+1]$. We choose functions $\varphi_n \in W_{2,\mathrm{comp}}^2(\R_+)$ satisfying the following properties:
\begin{enumerate}
    \item[(a1)] $0 \le \varphi_n(x) \le 1$ for all $x \in \R_+$;
    \item[(a2)] $\varphi_n(x) = 1$ for $x \in [0, n]$;
    \item[(a3)] $\supp \varphi_n \subset [0, n+1]$;
    \item[(a4)] $\varphi_n'(x) \le 0$ for $x \in \Delta_n$;
    \item[(a5)] there exists a constant $K > 0$, independent of $n$, such that $|\varphi_n'(x)| \le K$ for all $x \in \R_+$.
\end{enumerate}

\begin{lemmaEng}[Commutator Identity with Cut-Off Functions]\label{lem:cut-off}
For any cut-off function $\varphi_n$ ($n \in \N$) and any functions $u \in \Dom(S)$, $v \in \Dom(S^+)$, the products $\varphi_n u$ and $\varphi_n v$ satisfy:
\begin{enumerate}
    \item[(i)] $\varphi_n(0)u(0) = 0$, $\varphi_n(0)v(0) = 0$;
    \item[(ii)] $\varphi_n u \in \AC_{\mathrm{comp}}(\R_+)$, $\varphi_n v \in \AC_{\mathrm{comp}}(\R_+)$;
    \item[(iii)] $(\varphi_n u)^{[1]} = \varphi_n' u + \varphi_n u^{[1]} \in \AC_{\mathrm{comp}}(\R_+)$, $(\varphi_n v)^{\{1\}} = \varphi_n' v + \varphi_n v^{\{1\}} \in \AC_{\mathrm{comp}}(\R_+)$;
    \item[(iv)] Cancellation of Potentials in the Commutator:
    \begin{align}
    l[\varphi_n u] &= \varphi_n l[u] - \varphi_n'' u - 2\varphi_n' u' \in L_{2,\mathrm{comp}}(\R_+), \notag \\ %\label{eq:l-comm}
    l^+[\varphi_n v] &= \varphi_n l^+[v] - \varphi_n'' v - 2\varphi_n' v' \in L_{2,\mathrm{comp}}(\R_+); \label{eq:lplus-comm}
    \end{align}
    \item[(v)] $\varphi_n u \in \Dom(\dot{S}_0)$, $\varphi_n v \in \Dom(\dot{S}_0^+)$.
\end{enumerate}
\end{lemmaEng}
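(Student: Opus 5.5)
The plan is to verify (i)--(v) in order by direct computation with the quasi-derivatives \eqref{eq:2.2-1}--\eqref{eq:2.2-2}. Throughout we use that $\varphi_n\in W^2_{2,\mathrm{comp}}(\R_+)$, so $\varphi_n,\varphi_n'\in\AC_{\mathrm{comp}}(\R_+)$ and $\varphi_n''\in L_{2,\mathrm{comp}}(\R_+)$. We also use that $u,u^{[1]}\in\AC_{\mathrm{loc}}(\R_+)$ and, by Theorem~\ref{thm:2.1}~($4^\circ$), $u'\in L_{2,\mathrm{loc}}(\R_+)$. The statements for $v$ and $l^+$ follow verbatim with $Q,s$ replaced by $\overline Q,\overline s$, or alternatively by applying $C$ and Theorem~\ref{thm:2.1}~($5^\circ$). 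So I would only treat $u$.

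Items (i) and (ii) are immediate. Indeed, $u(0)=0$, and a product of two locally absolutely continuous functions is locally absolutely continuous. Support in $[0,n+1]$ comes from (a3).

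For (iii), compute
\[
(\varphi_n u)^{[1]}=(\varphi_n u)'-Q\varphi_n u=\varphi_n'u+\varphi_n(u'-Qu)=\varphi_n'u+\varphi_n u^{[1]}.
\]
Each summand is a product of $\AC_{\mathrm{loc}}$ functions, and each has compact support. Note that $Q$ enters only through $u^{[1]}$, so no regularity of $Q$ is needed.

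For (iv), differentiate (iii):
\[
((\varphi_n u)^{[1]})'=\varphi_n''u+\varphi_n'u'+\varphi_n'u^{[1]}+\varphi_n(u^{[1]})'.
\]
Then add $Q(\varphi_n u)^{[1]}+(Q^2-s)\varphi_n u$. The terms
\[
\varphi_n\bigl[(u^{[1]})'+Qu^{[1]}+(Q^2-s)u\bigr]=\varphi_n u^{[2]}
\]
regroup exactly. What remains is $\varphi_n''u+\varphi_n'u'+\varphi_n'(u^{[1]}+Qu)=\varphi_n''u+2\varphi_n'u'$, because $u^{[1]}+Qu=u'$. This gives
\[
l[\varphi_n u]=\varphi_n l[u]-\varphi_n''u-2\varphi_n'u',
\]
and this is the cancellation of the singular potential. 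I expect the only delicate point to be this cancellation: the singular terms $Q\varphi_n'u$ must cancel without using any derivative of $Q$. The key is to keep everything in terms of $u^{[1]}$ until the last step.

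Membership in $L_{2,\mathrm{comp}}$ then follows term by term. First, $\varphi_n l[u]\in L_2$ since $l[u]\in L_2$ and $0\le\varphi_n\le1$. Next, $\varphi_n''u\in L_2$ because $u$ is bounded on $[0,n+1]$ and $\varphi_n''\in L_2$. Finally, $\varphi_n'u'\in L_2$ because $\varphi_n'$ is bounded by (a5) and $u'\in L_2[0,n+1]$ by Theorem~\ref{thm:2.1}~($4^\circ$). All three terms vanish outside $[0,n+1]$.

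Item (v) collects (i)--(iv). We have $\varphi_n u\in L_2$, $\varphi_n u$ and $(\varphi_n u)^{[1]}$ lie in $\AC_{\mathrm{loc}}$, $(\varphi_n u)(0)=0$, $l[\varphi_n u]\in L_2$, and the support is compact. Hence $\varphi_n u\in\Dom(S)$ with compact support, i.e. $\varphi_n u\in\Dom(\dot S_0)$.
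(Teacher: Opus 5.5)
Your proposal is correct and follows the paper's proof essentially step for step. Both arguments expand $((\varphi_n u)^{[1]})'$ in the same way, regroup the $\varphi_n$-terms into $\varphi_n u^{[2]}$, and use the identity $u^{[1]}+Qu=u'$ to turn the leftover first-order terms into $2\varphi_n' u'$ without ever differentiating $Q$; your term-by-term check that $l[\varphi_n u]\in L_{2,\mathrm{comp}}(\R_+)$ (via boundedness of $u$ on $[0,n+1]$, $\varphi_n''\in L_2$, and $u'\in L_{2,\mathrm{loc}}$ from Theorem~\ref{thm:2.1}~($4^\circ$)) and your optional reduction of the $v$-case to the $u$-case via $C$ (valid because $\varphi_n$ is real) are just slightly more explicit versions of the paper's reasoning.
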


\begin{proof}[Proof of Lemma~\ref{lem:cut-off}]
\noindent(i) Follows from $u(0) = v(0) = 0$ and $\varphi_n(0) = 1$.

\noindent(ii) Since $\varphi_n \in W_{2,\mathrm{comp}}^2(\R_+) \subset C^1(\R_+)$ and $u, v \in \AC_{\mathrm{loc}}(\R_+)$, their products are absolutely continuous on compact sets. Compact support follows from $\supp \varphi_n \subset [0, n+1]$.

\noindent(iii) Compute the first quasi-derivative of $w := \varphi_n u$:
\[
w^{[1]} = w' - Qw = (\varphi_n' u + \varphi_n u') - Q\varphi_n u = \varphi_n' u + \varphi_n(u' - Qu) = \varphi_n' u + \varphi_n u^{[1]}.
\]
Since $\varphi_n, \varphi_n' \in \AC_{\mathrm{loc}}(\R_+)$ and $u, u^{[1]} \in \AC_{\mathrm{loc}}(\R_+)$, the product $(\varphi_n u)^{[1]}$ belongs to $\AC_{\mathrm{loc}}(\R_+)$. Compactness of support is preserved. The identical calculation yields $(\varphi_n v)^{\{1\}} = \varphi_n' v + \varphi_n v^{\{1\}} \in \AC_{\mathrm{comp}}(\R_+)$.

\noindent(iv) Apply the second quasi-derivative to $w = \varphi_n u$:
\[
l[\varphi_n u] = - \left[ (w^{[1]})' + Qw^{[1]} + (Q^2 - s)w \right].
\]
Differentiating $w^{[1]} = \varphi_n' u + \varphi_n u^{[1]}$:
\[
(w^{[1]})' = \varphi_n'' u + \varphi_n' u' + \varphi_n' u^{[1]} + \varphi_n(u^{[1]})'.
\]
Substituting this along with $Qw^{[1]} = \varphi_n' Qu + \varphi_n Qu^{[1]}$ and $(Q^2 - s)w = \varphi_n(Q^2 - s)u$:
\begin{align*}
l[\varphi_n u] &= -\left[ \varphi_n'' u + \varphi_n' u' + \varphi_n' u^{[1]} + \varphi_n(u^{[1]})' \right] - \left[ \varphi_n' Qu + \varphi_n Qu^{[1]} \right] - \varphi_n(Q^2 - s)u \\
&= \varphi_n \left[ -(u^{[1]})' - Qu^{[1]} - (Q^2 - s)u \right] - \varphi_n'' u - \varphi_n' \left( u' + u^{[1]} + Qu \right).
\end{align*}
By \eqref{eq:2.2-1}, $u^{[1]} + Qu = u'$. Therefore,
\[
u' + u^{[1]} + Qu = u' + u' = 2u'.
\]
Hence,
\[
l[\varphi_n u] = \varphi_n l[u] - \varphi_n'' u - 2\varphi_n' u'.
\]
The identical calculation for $l^+[\varphi_n v]$, using $v^{\{1\}} + \overline{Q}v = v'$, yields:
\[
l^+[\varphi_n v] = \varphi_n l^+[v] - \varphi_n'' v - 2\varphi_n' v'.
\]
Since $u \in \Dom(S)$, we have $u \in L_2(\R_+)$ and $u' = u^{[1]} + Qu \in L_{2,\mathrm{loc}}(\R_+)$. Since $\varphi_n'' \in L_2$ and $\varphi_n' \in L_\infty$ with support in $[n, n+1]$, we have $l[\varphi_n u] \in L_2(\R_+)$ and $l^+[\varphi_n v] \in L_2(\R_+)$.

\noindent(v) Combining (i)--(iv) and the compactness of supports shows that $\varphi_n u \in \Dom(\dot{S}_0)$ and $\varphi_n v \in \Dom(\dot{S}_0^+)$.
\end{proof}

\noindent\emph{Continuation of the Proof of Theorem~\ref{thm:3.2}.} 
Let $v \in \Dom(S^+)$ be a solution of the adjoint equation \eqref{eq:homog-adj}, i.e., $l^+[v] = 0$ and $v(0) = 0$. Define $v_n := \varphi_n v$. By Lemma~\ref{lem:cut-off}(v), $v_n \in \Dom(\dot{S}_0^+)$. Since $l^+[v] = 0$, formula \eqref{eq:lplus-comm} gives:
\[
l^+[v_n] = \varphi_n l^+[v] - \varphi_n'' v - 2\varphi_n' v' = -\varphi_n'' v - 2\varphi_n' v'.
\]
Compute the inner product $(\dot{S}_0^+ v_n, v_n)_{L_2} = (l^+[v_n], v_n)_{L_2}$:
\begin{equation}\label{eq:energy-1}
(l^+[v_n], v_n)_{L_2} = \int_0^\infty (-\varphi_n'' v - 2\varphi_n' v')\varphi_n \bar{v}\, dx = -\int_0^\infty \varphi_n'' \varphi_n |v|^2 dx - 2\int_0^\infty \varphi_n \varphi_n' v' \bar{v}\, dx.
\end{equation}
Integrating the first term by parts:
\[
-\int_0^\infty \varphi_n'' (\varphi_n |v|^2) dx = -\varphi_n' (\varphi_n |v|^2)\Big|_0^\infty + \int_0^\infty \varphi_n' (\varphi_n |v|^2)' dx.
\]
The boundary term at $x = 0$ vanishes because $v(0) = 0$, while at $x \to \infty$ it vanishes by the compact support of $\varphi_n$. Differentiating the product:
\[
(\varphi_n |v|^2)' = \varphi_n' |v|^2 + \varphi_n (|v|^2)' = \varphi_n' |v|^2 + \varphi_n (v'\bar{v} + v\bar{v}').
\]
Substituting this into the integral:
\[
-\int_0^\infty \varphi_n'' (\varphi_n |v|^2) dx = \int_0^\infty (\varphi_n')^2 |v|^2 dx + \int_0^\infty \varphi_n \varphi_n' (v'\bar{v} + v\bar{v}') dx.
\]
Plugging this into \eqref{eq:energy-1}:
\begin{align}
(l^+[v_n], v_n)_{L_2} &= \int_0^\infty (\varphi_n')^2 |v|^2 dx + \int_0^\infty \varphi_n \varphi_n' (v'\bar{v} + v\bar{v}') dx - 2\int_0^\infty \varphi_n \varphi_n' v' \bar{v}\, dx \notag\\
&= \int_0^\infty (\varphi_n')^2 |v|^2 dx + \int_0^\infty \varphi_n \varphi_n' (v\bar{v}' - v'\bar{v}) dx. \label{eq:energy-2}
\end{align}
Notice that $\varphi_n$ and $\varphi_n'$ are real-valued, while the term $v\bar{v}' - v'\bar{v} = 2i\,\mathrm{Im}(v\bar{v}')$ is purely imaginary. Therefore:
\[
\mathrm{Re}\left[ \varphi_n \varphi_n' (v\bar{v}' - v'\bar{v}) \right] = 0.
\]
Taking the real part of \eqref{eq:energy-2}, the second integral vanishes identically:
\begin{equation}\label{eq:energy-real}
\mathrm{Re}\,(\dot{S}_0^+ v_n, v_n)_{L_2(\R_+)} = \int_0^\infty (\varphi_n')^2 |v|^2 dx.
\end{equation}
On the other hand, by the strict accretivity assumption \eqref{eq:5.2} for $v_n \in \Dom(\dot{S}_0^+)$:
\[
\mathrm{Re}\,(\dot{S}_0^+ v_n, v_n)_{L_2} \ge \|v_n\|_{L_2}^2 = \int_0^\infty \varphi_n^2 |v|^2 dx \ge \int_0^n |v(x)|^2 dx,
\]
since $\varphi_n(x) = 1$ on $[0, n]$. Since $\supp \varphi_n' \subset [n, n+1]$ and $|\varphi_n'(x)| \le K$, equation \eqref{eq:energy-real} implies:
\begin{equation}\label{eq:tail-bound}
\int_0^n |v(x)|^2 dx \le \int_n^{n+1} (\varphi_n')^2 |v|^2 dx \le K^2 \int_n^{n+1} |v(x)|^2 dx.
\end{equation}
Since $v \in L_2(\R_+)$, the tail of the integral tends to zero as $n \to \infty$:
\[
\lim_{n \to \infty} \int_n^{n+1} |v(x)|^2 dx = 0.
\]
Passing to the limit as $n \to \infty$ in \eqref{eq:tail-bound}:
\[
\int_0^\infty |v(x)|^2 dx = \lim_{n \to \infty} \int_0^n |v(x)|^2 dx \le K^2 \lim_{n \to \infty} \int_n^{n+1} |v(x)|^2 dx = 0.
\]
Therefore, $v(x) = 0$ almost everywhere on $\R_+$.

Thus, $\Ker(S^+) = \{0\}$, which proves that $\mathrm{def}(S_0) = 0$, establishing the $m$-accretivity of $S_0$. Repeating the argument for the equation $Su = 0$ with $u \in \Dom(S)$ shows that $\Ker(S) = \{0\}$, which gives $\mathrm{def}(S_0^+) = 0$ and the $m$-accretivity of $S_0^+$.

\smallskip
\noindent\emph{Proof of the Coincidence $S_0 = S$ and $S_0^+ = S^+$.} 
Since $S_0$ is $m$-accretive, its adjoint $S_0^* = S^+$ is also $m$-accretive. On the other hand, the minimal operator $S_0^+$ is $m$-accretive, and by construction $S_0^+ \subset S^+$. Since an $m$-accretive operator admits no proper accretive extensions in $H$, the inclusion $S_0^+ \subset S^+$ of two $m$-accretive operators immediately forces:
\[
S_0^+ = S^+.
\]
Taking adjoints on both sides yields $(S_0^+)^* = (S^+)^*$, which by Theorem~\ref{thm:2.1}($2^\circ$) gives:
\[
S = S_0.
\]
Finally, from the domain characterization \eqref{eq:DomS0}, $\Dom(S_0) = \Dom(S)$ is equivalent to $[u, v](\infty)$ $= 0$ for all $u \in \Dom(S)$ and $v \in \Dom(S^+)$, establishing \eqref{eq:vanish-boundary}. 

The proof is complete.

%%%%%%%%%%%%%%%%%%%%%%%%%%%%%%%%%%%%%%%%%%%%%%%%%%%%%%%%%%%%%%%%%%%%%%%%%%%%%%%%%%%%%%%%%%%%%%%%%%%%%%%%%%%%%%%%%%%%%%%%%%%%%%%%%%%%%%%%%%%%%%%%%%%%%
%%%%%%%%%%%%%%%%%%%%%%%%%%%%%%%%%%%%%%%%%%%%%%%%%%%%%%%%%%%%%%%%%%%%%%%%%%%%%%%%%%%%%%%%%%%%%%%%%%%%%%%%%%%%%%%%%%%%%%%%%%%%%%%%%%%%%%%%%%%%%%%%%%%%%

%%%%%%%%%%%%%%%%%%%%%%%%%%%%%%%%%%%%%%%%%%%%%%%%%%%%%%%%%%%%%%%%%%%%%%%%%%%%%%%%%%%%%%%%%%%%%%%%%%%%%%%%%%%%%%%%%%%%%%%%%%%%%%%%%%%%%%%%%%%%%%%%%%%%%
%%%%%%%%%%%%%%%%%%%%%%%%%%%%%%%%%%%%%%%%%%%%%%%%%%%%%%%%%%%%%%%%%%%%%%%%%%%%%%%%%%%%%%%%%%%%%%%%%%%%%%%%%%%%%%%%%%%%%%%%%%%%%%%%%%%%%%%%%%%%%%%%%%%%%

%%%%%%%%%%%%%%%%%%%%%%%%%%%%%%%%%%%%%%%%%%%%%%%%%%%%%%%%%%%%%%%%%%%%%%%%%%%%%%%%%%%%%%%%%%%%%%%%%%%%%%%%%%%%%%%%%%%%%%%%%%%%%%%%%%%%%%%%%%%%%%%%%%%%%
%%%%%%%%%%%%%%%%%%%%%%%%%%%%%%%%%%%%%%%%%%%%%%%%%%%%%%%%%%%%%%%%%%%%%%%%%%%%%%%%%%%%%%%%%%%%%%%%%%%%%%%%%%%%%%%%%%%%%%%%%%%%%%%%%%%%%%%%%%%%%%%%%%%%%
\section*{Funding}
The first named author would like to thank the Isaac Newton Institute for Mathematical Sciences, Cambridge, for support and hospitality during the "Solidarity Program" where work was supported by the "EPSRC grant no EP/RO14604/1".

The author wishes to thank the Department of Mathematics, King's College London, for their hospitality, and to the Ministry of Education and Science of Ukraine for support under the grant 0126U000898.

%%%%%%%%%%%%%%%%%%%%%%%%%%%%%%%%%%%%%%%%%%%%%%%%%%%%%%%%%%%%%%%%%%%%%%%%%%%%%%%%%%%%%%%%%%%%%%%%%%%%%%%%%%%%%%%%%%%%%%%%%%%%%%%%%%%%%%%%%%%%%%%%%%%%%
%%%%%%%%%%%%%%%%%%%%%%%%%%%%%%%%%%%%%%%%%%%%%%%%%%%%%%%%%%%%%%%%%%%%%%%%%%%%%%%%%%%%%%%%%%%%%%%%%%%%%%%%%%%%%%%%%%%%%%%%%%%%%%%%%%%%%%%%%%%%%%%%%%%%%


\begin{thebibliography}{99}

\bibitem{AlbGszHKH2005}
 {Albeverio, S.; Gesztesy, F.; H{ø}egh Krohn, R.; Holden, H.}
 {\textit{Solvable models in quantum mechanics}},
 {2nd ed.},
 {With an appendix by Pavel Exner};
 {AMS Chelsea Publishing}:
 {Providence, RI},
 {2005};
 {xiv+488~pp.};
 doi:~{10.1090/chel/350}.


\bibitem{AlKsMl2010}
 {Albeverio, S.; Kostenko, A.; Malamud, M.}
 {Spectral theory of semibounded {S}turm-{L}iouville operators with local interactions on a discrete set}.
 {\textit{J. Math. Phys.}}
 {\textbf{2010}},
 {51},
 {no.~10},
 {102102},
 {24~pp.};
 {doi:~{10.1063/1.3490672}.}


\bibitem{ArlSchm2026}
 {Arlinskii, Y.; Schm\"udgen, K.}
 {On {$C$}-symmetric and {$C$}-self-adjoint unbounded operators on {H}ilbert space}.
 {\emph{Integr. Equ. Oper. Theory}}
 {\textbf{2026}}
 {98},
 {n.~2},
 {Paper N.~14, 25},
 {doi:~{10.1007/s00020-026-02831-3}}.
 
 
 

\bibitem{Brsch1985}
 {Brasche, J.},
 {\textit{Perturbation of Schr\"{o}dinger Hamiltonians by measures --- selfadjointness and semiboundedness}},
 {J. Math. Phys.}
 {\textbf{26}}
 {(1985)},
 {no.~4},
 (621--626).


\bibitem{Brnk1959}
 {Brinck, I.},
 {\textit{Self-adjointness and spectra of Sturm-Liouville operators}},
 {Math. Scand.},
 {\textbf{7}}
 {(1959)},
 {no.~1},
 {219--239}.


%\bibitem{ClGs2003}
% {Clark, S.; Gesztesy, F.}
% {On Povzner--Weinholtz-type self-adjointness results for matrix-valued Sturm--Liouville operators}.
% {\textit{Proceedings of the Royal Society of Edinburgh}}
% {\textbf{2003}},
% {133A},
% {no.~4},
% {747--758};
% doi:~{10.1017/S0308210500002651}.


%\bibitem{DnfSchw_II_1963}
% {Dunford, N.; Schwartz, J. T.}
% {\textit{Linear operators. Part II: Spectral theory. Selfadjoint operators in Hilbert space}}.
% {With the assistance of William G. Bade and Robert G. Bartle.};
% {Interscience Publishers John Wiley \& Sons}:
% {New York - London},
% {1963};
% {ix+pp. 859--1923+7};
% {\MR{0188745}}.

\bibitem{EcGsNcTs2013}
 {Eckhardt, J.; Gesztesy, F.; Nichols, R.; Teschl, G.}
 {Weyl--Titchmarsh theory for Sturm--Liouville operators with distributional potentials}.
 {\textit{Opuscula Math.}}
 {\textbf{2013}},
 {33},
 {n.~3},
 {467--563};
 doi:~{10.7494/OpMath.2013.33.3.467}.


\bibitem{EvMrksBook1999}
 {Everitt, W. N.; Markus, L.}
 {\textit{Boundary value problems and symplectic algebra for ordinary differential and quasi-differential operators}}.
 {Mathematical Surveys and Monographs},
 {Vol.~\textbf{61}},
 {American Mathematical Society, Providence, RI},
 {1999};
 {xii+187};
 {{MR1647856}}.


\bibitem{Glzmn1966}
 {Glazman, I.}
 {\textit{Direct methods of qualitative spectral analysis of singular differential operators}};
 {Daniel Davey \& Co., Inc.}:
 {New York},
 {1966};
 {ix+234~pp.};
 {MR{0190800}}.


\bibitem{GszNclZnch2024}
 {Gesztesy, F.; Nichols, R.; Zinchenko, M.}
 {\textit{Sturm-{L}iouville operators, their spectral theory, and some  applications}};
 {American Mathematical Society Colloquium Publications},
 {\textbf{67}},
 {American Mathematical Society}:
 {Providence, RI},
 {2024};
 {xv+927~pp.};
 doi:~{10.1090/coll/067}.


\bibitem{GrnMkh2010}
 {Goriunov, A.; Mikhailets, V.}
 {Regularization of singular Sturm-Liouville equations}.
 {\textit{Methods Funct. Anal. Topology}}
 {\textbf{2010}},
 {16},
 {no.~2},
 {120--130};
 {MR{2667807}}.


\bibitem{GrnMkhPnk2013}
 {Goriunov, A.; Mikhailets, V.; Pankrashkin, K.}
 {Formally self-adjoint quasi-differential operators and boundary-value problems}.
 {\textit{Electron. J. Differential Equations}}
 {\textbf{2013}},
 {No.~101},
 {16~pp.};
 {MR{3065054}}.


%\bibitem{Hrtmn1948}
% {Hartman, P.},
% {Differential equations with non-oscillatory eigenfunctions}.
% {\textit{Duke Math. J.}}
% {\textbf{1948}},
% {15},
% {697--709};
% {\MR{0027927}}.


\bibitem{HrMk2012}
 {Hryniv, R.; Mykytyuk, Ya.}
 {Self-adjointness of {S}chr\"odinger operators with singular potentials}.
 {\textit{Methods Funct. Anal. Topology}}
 {\textbf{2012}},
 {18},
 {no.~2},
 {152--159};
 {MR{2978191}}.


%\bibitem{Ismgl1963}
% {Ismagilov, R. S.}
% {On the self-adjointness of the Sturm-Liouville operator}
% {(Russian)}.
% {\textit{Uspehi Mat. Nauk}}
% {\textbf{1963}},
% {18},
% {no.~5(113)},
% {161--166};
% {MR{0155037}}.


\bibitem{Kato1995}
 {Kato, T.}
 {\textit{Perturbation theory for linear operators}};
 {Classics in Mathematics};
 {Springer-Verlag}:
 {Berlin},
 {1995};
 {xxii+619~pp.};
 {MR{1335452}}.


%\bibitem{KsMlNc2022}
% {Kostenko, A.; Malamud, M.; Nicolussi, N.}
% {A {G}lazman-{P}ovzner-{W}ienholtz theorem on graphs}.
% {\textit{Adv. Math.}}
% {\textbf{2022}},
% {395},
% {Paper No.~108158},
% {30~pp.};
% doi:~{10.1016/j.aim.2021.108158}.


%\bibitem{Ldsk1960}
% {Lidski\u i, V.},
% {A non-selfadjoint operator of {S}turm-{L}iouville type with a discrete spectrum},
% {\textit{Trudy Moskov. Mat. Ob\v{s}\v{c}.}}
% {\textbf{1960}},
% {9},
% {45--79};
% {\MR146442}.


%\bibitem{MzVrbActaMathSin2019}
% {Maz'ya, V.; Verbitsky, I. E.}
% {Accretivity of the general second order linear differential operator}.
% {\textit{Acta Math. Sin. (Engl. Ser.)}}
% {\textbf{2019}},
% {35},
% {no.~6},
% {832--852};
% doi:~{10.1007/s10114-019-8127-9}.


\bibitem{MkhGrnMlb2022}
 {Mikhailets, V.; Goriunov, A.; Molyboga, V.}
 {Povzner--Wienholtz-type theorems for Sturm--Liouville operators with singular coefficients}.
 {\textit{Complex Anal. Oper. Theory}}
 {\textbf{2022}},
 {16},
 {no.~8},
 {Paper No.~113},
 {13~pp.};
 doi:~{10.1007/s11785-022-01291-y}.


\bibitem{MkhMlb2013_mfat19n1}
 {Mikhailets, V.; Molyboga, V.}
 {Schr\"odinger operators with complex singular potentials}.
 {\textit{Methods Funct. Anal. Topology}}
 {\textbf{2013}},
 {19},
 {no.~1},
 {16--28};
 {MR{3088075}}.


\bibitem{MkhMlb2013_mfat19n2}
 {Mikhailets, V.; Molyboga, V.}
 {Remarks on Schr\"odinger operators with singular matrix potentials}.
 {\textit{Methods Funct. Anal. Topology}}
 {\textbf{2013}},
 {19},
 {no.~2},
 {161--167};
 {MR{3098494}}.


\bibitem{MkhMlb2018}
 {Mikhailets, V.; Molyboga, V.}
 {Schr\"odinger operators with measure-valued potentials: semiboundedness and spectrum}.
 {\textit{Methods Funct. Anal. Topology}}
 {\textbf{2018}},
 {24},
 {no.~3},
 {240--254};
 {MR{3860804}}.

\bibitem{MkhMlb2026}
 {Mikhailets, V.; Molyboga, V.}
 {Uniqueness problem for accretive Schr\"{o}dinger operators with complex singular coefficients}.
 {\emph{Bull. London Math. Soc.}}
 {\textbf{2026}}
 {58},
 {9},
 {1--27};
 {doi:~10.1112/blms.70490}.


%\bibitem{MkhMrchNvk2017}
% {Mikhailets, V.; Murach, A.; Novikov, V.}
% {Localization principles for Schr\"odinger operator with a singular matrix potential}.
% {\textit{Methods Funct. Anal. Topology}}
% {\textbf{2017}},
% {23},
% {no.~4},
% {367--377};
% {\MR{3745187}}.


%\bibitem{MkhSbl1999}
% {Mikhailets, V.; Sobolev, A.}
% {Common eigenvalue problem and periodic Schr\"odinger operators}.
% {\textit{J. Funct. Anal.}}
% {\textbf{1999}},
% {165},
% {no.~1},
% {150--172};
% doi:~{10.1006/jfan.1999.3406}.


%\bibitem{Mlch1953}
% {Mol\v{c}anov, A.},
% {On conditions for discreteness of the spectrum of self-adjoint differential equations of the second order},
% {\textit{Trudy Moskov. Mat. Ob\v{s}\v{c}.}}
% {\textbf{1953}},
% {2},
% {169--199};
% {\MR{57422}}.


%\bibitem{NmrkMMO1954}
% {Na\u imark, M.},
% {Investigation of the spectrum and the expansion in eigenfunctions of a nonselfadjoint operator of the second order on a %semi-axis},
% {\textit{Trudy Moskov. Mat. Ob\v{s}\v{c}.}}
% {\textbf{1954}},
% {3},
% {181--270};
% {\MR{62311}}.


%\bibitem{Naimark}
% {M.~Naimark},
% {\textit{Linear differential operators. Part II: Linear differential operators in Hilbert space}}.
% {With additional material by the author, and a supplement by V. È. Ljance}.
% {Translated from the Russian by E. R. Dawson. English translation edited by W. N. Everitt}.
% {Frederick Ungar Publishing Co.},
% {New York},
% {1968},
% {xv+352 pp.}


\bibitem{NmrkRus1969}
 {Naimark, M.}
 {\textit{Linear Differential Operators}}
 {(Russian)},
 {2nd ed.};
 {With an appendix by V. \`E. Ljance};
 {Nauka}:
 {Moscow},
 {1969};
 {526~pp.};
 {MR{0353061}}.


%\bibitem{Orchk1988}
% {Orochko, Yu.\ B.}
% {The hyperbolic equation method in the theory of operators of {S}chr\"odinger type with locally integrable potential}.
% {\textit{Uspekhi Mat. Nauk}}
% {\textbf{1988}},
% {43},
% {no.~2(260)},
% {43--86, 230};
% doi: {10.1070/RM1988v043n02ABEH001728}.


%\bibitem{Orchk1974}
% {Oro\v cko, Ju.\ B.}
% {Sufficient conditions for the selfadjointness of a {S}turm-{L}iouville operator}
% {(Russian)}.
% {\textit{Mat. Zametki}}
% {\textbf{1974}},
% {15},
% {271--280};
% {\MR{0375002}}.


\bibitem{Phl1959}
 {Phillips, R. S.}
 {Dissipative operators and hyperbolic systems of partial differential equations}.
 {\textit{Trans. Amer. Math. Soc.}}
 {\textbf{1959}},
 {90},
 {193--254};
 doi:~{10.2307/1993202}.


%\bibitem{Pvznr1953}
% {Povzner, A. Ya.}
% {On the expansion of arbitrary functions in eigenfunctions of the operator $-\Delta u+cu$}
% {(Russian)}.
% {\textit{Mat. Sbornik N.S.}}
% {\textbf{1953}},
% {32/74},
% {109--156};
% {\MR{0053330}}.


%\bibitem{ReedSimon-book02_Eng_1975}
% {Reed, M.; Simon, B.}
% {\textit{Methods of modern mathematical physics. II. Fourier analysis, self-adjointness}};
% {Academic Press [Harcourt Brace Jovanovich, Publishers]}:
% {New York - London},
% {1975};
% {xv+361~pp.};
% {\MR{493420}}.


%\bibitem{ReedSimon-book04_Eng_1978}
% {Reed, M.; Simon, B.}
% {\textit{Methods of modern mathematical physics. IV. {A}nalysis of operators}};
% {Academic Press [Harcourt Brace Jovanovich, Publishers]}:
% {New York - London},
% {1978};
% {xv+396~pp.};
% {\MR{493421}}.


%\bibitem{Rllch1951}
% {Rellich, F.}
% {Halbbeschr\"ankte gew\"ohnliche Differentialoperatoren zweiter Ordnung}.
% {\textit{Math. Ann.}}
% {\textbf{1951}},
% {122},
% {343--368};
% doi:~{10.1007/BF01342848}.

%\bibitem{Sears1950}
% {Sears, D. B.}
% {Note on the uniqueness of the Green's functions associated with certain differential equations}.
% {\textit{Canad. J. Math.}}
% {\textbf{1950}},
% {2},
% {314--325};
% doi:~{10.4153/cjm-1950-029-9}.

\bibitem{Schm2012}
 {Schm\"{u}dgen, K.}
 {\textit{Unbounded self-adjoint operators on Hilbert space}};
 {Graduate Texts in Mathematics},
 {265};
 {Springer}:
 {Dordrecht},
 {2012};
 {xx+432~pp.};
 doi:~{10.1007/978-94-007-4753-1}.


\bibitem{Shin1938_I}
 {D.~Shin},
 {\textit{Existence theorem for quasi-differential equation of $n$-th order}}
 {(Russian)},
 {Dokl. Akad. Nauk SSSR}
 {\textbf{XIII}}
 {(1938)},
 {no.~8},
 {515--518}.

\bibitem{Shin1938_II}
 {D.~Shin},
 {\textit{About solutions of the self-adjoint differential equation $u^{[n]}=l u$, $\textbf{I}(l)\neq 0$, belonging to $L_{2}(0,\infty)$}}
 {(Russian)},
 {Dokl. Akad. Nauk SSSR}
 {\textbf{XIII}}
 {(1938)},
 {no.~8},
 {519--522}.

\bibitem{Shin1938_III}
 {D.~Shin},
 {\textit{About quasi-differential operators in Hilbert space}}
 {(Russian)},
 {Dokl. Akad. Nauk SSSR}
 {\textbf{XIII}}
 {(1938)},
 {no.~8},
 {523--516}.

\bibitem{Shin1940}
 {D.~Shin},
 {\textit{About solutions of quasi-differential equation of $n$-th order}}
 {(Russian)},
 {Matem. Zb.}
 {\textbf{7}}
 {(\textbf{49})}
 {(1940)},
 {479--532}.
 {MR{0002428}}.


\bibitem{Shin1943}
 {D.~Shin},
 {\textit{About quasi-differential operators in a Hilbert space}}
 {(Russian)},
 {Matem. Zb.}
 {\textbf{13}}
 {(\textbf{55})}
 {(1943)},
 {39--70};
 {MR{0011534}}.


%\bibitem{Schmincke1991}
% {Schmincke, U.-W.}
% {Proofs of Povzner-Wienholtz type theorems on selfadjointness of  Schr\"odinger operators by means of positive 
%eigensolutions},
% {\textit{Bull. London Math. Soc.}}
% {\textbf{1991}},
% {23},
% {no.~3},
% {263--266};
% doi:~{10.1112/blms/23.3.263}.


%\bibitem{Shbn2001}
% {Shubin, M.}
% {Essential self-adjointness for semi-bounded magnetic {S}chr\"odinger operators on non-compact manifolds}.
% {\textit{J. Funct. Anal.}}
% {\textbf{2001}},
% {186},
% {no.~1},
% {92--116};
% doi:~{10.1006/jfan.2001.3778}.


%\bibitem{Smdr1978}
% {Simader, C. G.}
% {Essential self-adjointness of Schr\"odinger operators bounded from below}.
% {\textit{Math. Z.}}
% {\textbf{1978}},
% {159},
% {no.!1},
% {47--50};
% doi:~{10.1007/BF01174567}.


%\bibitem{Smdr1992}
% {Simader, C. G.}
% {Remarks on essential selfadjointness of Schr\"odinger operators with singular electrostatic potentials}.
% {\textit{J. Reine Angew. Math.}}
% {\textbf{1992}},
% {431},
% {1--6};
% doi:~{10.1515/crll.1992.431.1}.


%\bibitem{StHnsn1966}
% {Stetkaer-Hansen, H.}
% {A generalization of a theorem of Wienholtz concerning essential selfadjointness of singular elliptic operators}.
% {\textit{Math. Scand.}}
% {\textbf{1966}},
% {19},
% {108--112};
% doi:~{10.7146/math.scand.a-10798}.


\bibitem{Wei1987}
 {Weidmann, J.}
 {\textit{Spectral theory of ordinary differential operators}};
 {Lecture Notes in Mathematics},
 {1258},
 {Springer-Verlag}:
 {Berlin},
 {1987};
 {vi+303~pp.};
 doi:~{10.1007/BFb0077960}.

%\bibitem{Weyl1910}
% {Weyl, H.}
% {\"Uber gew\"ohnliche Differentialgleichungen mit Singularit\"aten und die zugeh\"origen Entwicklungen willk\"urlicher 
% Funktionen}
% {(German)}.
% {\textit{Math. Ann.}}
% {\textbf{1910}},
% {68},
% {no.~2},
% {220--269};
% doi:~{10.1007/BF01474161}.

%\bibitem{Wnhltz1958}
% {Wienholtz, E.}
% {Halbbeschr\"ankte partielle Differentialoperatoren zweiter Ordnung vom elliptischen Typus}.
% {\textit{Math. Ann.}}
% {\textbf{1958}},
% {135},
% {50--80};
% doi:~{10.1007/BF01350827}.


\bibitem{Zttl1975}
 {Zettl, A.}
 {Formally self-adjoint quasi-differential operator}.
 {Rocky Mountain J. Math.}
 {\textbf{1975}},
 {5},
 {no.~3},
 {453--474};
 doi:~{10.1216/RMJ-1975-5-3-453}.


%\bibitem{Zttl2005}
% {Zettl, A.}
% {\textit{Sturm--Liouville Theory}};
% {Mathematical Surveys and Monographs},
% {121};
% {American Mathematical Society}:
% {Providence, RI},
% {2005};
% {xii+328~pp.};
% doi:~{10.1090/surv/121}.


%\bibitem{Zttl2021}
% {Zettl, A.}
% {\textit{Recent developments in {S}turm-{L}iouville theory}};
% {De Gruyter Studies in Mathematics},
% {76},
% {De Gruyter}:
% {Berlin},
% {2021};
% {xii+245~pp.};
% doi:~{10.1515/9783110719000}.


\end{thebibliography}
\end{document}